   \numberwithin{equation}{section}
\newtheorem{thm}{Theorem}[section]
\newtheorem{cor}[thm]{Corollary}
\newtheorem{lem}[thm]{Lemma}
\newtheorem{defn}[thm]{Definition}
\newtheorem{rem}[thm]{Remark}
\begin{document}
\begin{frontmatter}
\author[rvt1,rvt2]{Jian Wang}
\author[rvt1]{Yong Wang\corref{cor2}}
\ead{wangy581@nenu.edu.cn}
\cortext[cor2]{Corresponding author.}
\address[rvt1]{School of Mathematics and Statistics, Northeast Normal University,
Changchun, 130024, P.R.China}
\address[rvt2]{Chengde Petroleum College,
Chengde, 067000, P.R.China}

\title{Noncommutative Residue  and  Dirac operators for Manifolds \\ with the Conformal Robertson-Walker metric}
\begin{abstract}
In this paper, we prove a Kastler-Kalau-Walze type theorem for
4-dimensional and 6-dimensional spin manifolds with boundary  associated with the conformal Robertson-Walker metric. And we give
two kinds of operator theoretic explanations of the gravitational action for boundary in the case of 4-dimensional manifolds with
flat boundary. In particular,  for $6$-dimensional spin manifolds with boundary with the conformal Robertson-Walker metric, we obtain the
 noncommutative residue of the composition of $\pi^+D^{-1}$ and $\pi^+D^{-3}$ is proportional to the Einstein-Hilbert action for
 manifolds with boundary.
\end{abstract}
\begin{keyword}
 lower-dimensional volumes; noncommutative residue; gravitational action; conformal Robertson-Walker metric.
\MSC[2000] 53G20, 53A30, 46L87
\end{keyword}
\end{frontmatter}
\section{Introduction}
The noncommutative residue plays a prominent role in noncommutative geometry \cite{Gu}\cite{Wo}.
 In \cite{Co1}, Connes used the noncommutative residue to derive a conformal 4-dimensional Polyakov action analogy.  Several
years ago, Connes made a challenging observation that the noncommutative residue of the square of the inverse of the Dirac
operator was proportional to the Einstein-Hilbert action, which was called Kastler-Kalau-Walze Theorem now. In \cite{Ka}, Kastler gave a
brute-force proof of this theorem. In \cite{KW}, Kalau and Walze proved this theorem by the normal coordinates way simultaneously.
In \cite{Ac}, Ackermann gave a note on a new proof of this theorem by means of the heat kernel expansion.

On the other hand, Fedosov etc. defined a noncommutative residue on Boutet de Monvel's algebra and proved that it was a
unique continuous trace in \cite{FGLS}. Wang generalized  the Connes' results to the case of manifolds with boundary in \cite{Wa1} \cite{Wa2} ,
and proved a Kastler-Kalau-Walze type theorem for the Dirac operator and the signature operator for lower-dimensional manifolds
with boundary.  In \cite{WW}, we get the Kastler-Kalau-Walze type theorem associated to nonminimal operators by heat equation asymptotics
on compact manifolds without boundary. The purpose of papers \cite{Wa3} \cite{Wa4} is to prove  the Kastler-Kalau-Walze Theorem for manifolds
 associated with the metric near the boundary as follows: $ g^M=\frac{1}{h(x_n)}g^{\partial M}+dx_n^2$.
  In \cite{AF}, Antoci considered the metric $ g^M=e^{-2(a+1)x_{n}}dx_n^2+e^{-2bx_{n}}g^{\partial M}$   near the boundary
  and studied the spectrum of the Laplace-Beltrami operator for p-forms.
 In \cite{FB}, the authors dealed with a  particular class of warped products, i.e. when the pseudo-
metric in the base is affected by a conformal change. Motivated by Antoci and Dobarro etc., in this paper we consider the following metric
 near the boundary
$ g^M=\frac{1}{\varphi(x_n)}g^{\partial M}+\psi(x_n)dx_n^2$, which we call the conformal Robertson-Walker metric.
We  derive the gravitational action on boundary by the
noncommutative residue  associated with Dirac operator for the above metric.  For lower
dimensional manifolds with boundary, we compute  $\widetilde{{\rm Wres}}[\pi^+D^{-p_{1}}\circ\pi^+D^{-p_{2}}]$ with
 the conformal Robertson-Walker metric $g^M$ on $M$ , and we  get a generalized Kastler-Kalau-Walze theorem  for  lower dimensional spin manifolds.
For 6-dimensional manifolds with boundary with the conformal Robertson-Walker metric,  we  get
$\widetilde{{\rm Wres}}[\pi^+D^{-1}\circ\pi^+D^{-3}]$ is proportional to the Einstein-Hilbert action for manifolds with boundary. Which
gives a operator theoretic explanations of the total gravitational action for manifolds with boundary,
i.e, $I_{\rm {Gr}}=\frac{-3}{80 \pi\Omega_5 }\widetilde{{\rm Wres}}[\pi^+D^{-1}\circ\pi^+D^{-3}]$.

 This paper is organized as follows: In Section 2, we define lower dimensional volumes of spin manifolds with boundary. In
Section 3, for $4$-dimensional spin manifolds with boundary of  the conformal Robertson-Walker metric and the associated Dirac operator $D$ ,
 we compute the lower dimensional volume ${\rm Vol}^{(1,1)}_4$ and get a Kastler-Kalau-Walze type theorem in this case. In
Section 4, for $6$-dimensional spin manifolds with boundary of the conformal Robertson-Walker metric and the associated Dirac operator $D$ ,
 we compute the lower dimensional volume ${\rm Vol}^{(2,2)}_6$ and get a Kastler-Kalau-Walze type theorem in this case.
 In Section 5, for $6$-dimensional spin manifolds with boundary of the conformal Robertson-Walker metric and the associated Dirac operator $D$ and
 $D^{3}$ ,  We compute the lower dimensional volume ${\rm Vol}^{(1,3)}_6$ for $6$-dimensional spin manifolds with
boundary and obtain the
 noncommutative residue of the composition of $\pi^+D^{-1}$ and $\pi^+D^{-3}$ is proportional to the Einstein-Hilbert action for
 manifolds with boundary..

\section{Lower dimensional volumes of spin manifolds with boundary}
 In order to define lower dimensional volumes of spin manifolds with boundary, we need some basic facts and formulae about Boutet de
Monvel's calculus and the definition of the noncommutative residue for manifolds with boundary. We can find them in Section 2,3 \cite{Wa2}
and Section 2.1 \cite{Wa3}.

Let $M$ be a $n$-dimensional compact oriented spin manifold with boundary $\partial M$. We assume that the metric $g^M$ on $M$
has the following form near the boundary,
 \begin{equation}
 g^M=\frac{1}{\varphi(x_n)}g^{\partial M}+\psi(x_n)dx_n^2,
\end{equation}
where $g^{\partial M}$ is the metric on  ${\partial M}$, $\varphi(x_n), \psi(x_n)>0$ and  $\varphi(0)=\psi(0)=1$.
Let $D$ be the Dirac operator associated to $g$ on the spinors bundle $S(TM)$\cite{Wa3}.
 Let $p_1,p_2$ be nonnegative integers and $p_1+p_2\leq n$. From Section 2 in \cite{Wa3}, we have

\begin{defn}
Lower dimensional volumes of spin manifolds with boundary are defined by
 \begin{equation}
{\rm Vol}^{(p_1,p_2)}_nM:= \widetilde{{\rm Wres}}[\pi^+D^{-p_1}\circ\pi^+D^{-p_2}].
\end{equation}
\end{defn}
 Denote by $\sigma_l(A)$ the $l$-order symbol of an operator $A$. By (2.1.4)-(2.1.8) in \cite{Wa3}, we get
\begin{equation}
\widetilde{{\rm Wres}}[\pi^+D^{-p_1}\circ\pi^+D^{-p_2}]=\int_M\int_{|\xi|=1}{\rm
trace}_{S(TM)}[\sigma_{-n}(D^{-p_1-p_2})]\sigma(\xi)dx+\int_{\partial M}\Phi,
\end{equation}
and
\begin{eqnarray}
\Phi &=&\int_{|\xi'|=1}\int^{+\infty}_{-\infty}\sum^{\infty}_{j, k=0}\sum\frac{(-i)^{|\alpha|+j+k+1}}{\alpha!(j+k+1)!}
\times {\rm trace}_{S(TM)}[\partial^j_{x_n}\partial^\alpha_{\xi'}\partial^k_{\xi_n}\sigma^+_{r}(D^{-p_1})(x',0,\xi',\xi_n)
\nonumber\\
&&\times\partial^\alpha_{x'}\partial^{j+1}_{\xi_n}\partial^k_{x_n}\sigma_{l}(D^{-p_2})(x',0,\xi',\xi_n)]d\xi_n\sigma(\xi')dx',
\end{eqnarray}
 where the sum is taken over $r-k-|\alpha|+l-j-1=-n,~~r\leq -p_1,l\leq -p_2$.

\section{A Kastler-Kalau-Walze type theorem for $4$-dimensional spin manifolds with boundary of  conformal warped product metric }
 In this section, We compute the lower dimensional volume ${\rm Vol}^{(1,1)}_4$ for $4$-dimensional spin manifolds with
boundary of  conformal  warped product metric  and get a Kastler-Kalau-Walze type theorem in this case.

Since $[\sigma_{-4}(D^{-2})]|_M$
has the same expression as $\sigma_{-4}(D^{-2})$ in the case of
manifolds without boundary in \cite{Ka}, \cite{KW}, \cite{Ac} and \cite{Wa3}, we have
\begin{equation}
\int_M\int_{|\xi|=1}{\rm tr}[\sigma_{-4}(D^{-2})]\sigma(\xi)dx=-\frac{\Omega_4}{3}\int_Ms{\rm dvol}_M,
\end{equation}
where $\Omega_n=\frac{2\pi^{\frac{n}{2}}}{\Gamma(\frac{n}{2})}$. So we only need to compute $\int_{\partial M}\Phi$.

Firstly, we compute the symbol $\sigma(D^{-1})$ of $D^{-1}$. Recall the definition of the Dirac operator $D$  \cite{FGLS}\cite{Y}.
 Let $\nabla^L$ denote the Levi-Civita connection about $g^M$. In the local coordinates $\{x_i; 1\leq i\leq n\}$ and the
 fixed orthonormal frame $\{\widetilde{e_1},\cdots,\widetilde{e_n}\}$, the connection matrix $(\omega_{s,t})$ is defined by
\begin{equation}
\nabla^L(\widetilde{e_1},\cdots,\widetilde{e_n})= (\widetilde{e_1},\cdots,\widetilde{e_n})(\omega_{s,t}).
\end{equation}

 The Dirac operator is defined by
\begin{equation}
D=\sum^n_{i=1}c(\widetilde{e_i})\Big[\widetilde{e_i}
-\frac{1}{4}\sum_{s,t}\omega_{s,t}(\widetilde{e_i})c(\widetilde{e_s})c(\widetilde{e_t})\Big].
\end{equation}
where $c(\widetilde{e_i})$ denotes the Clifford action.

Then,
\begin{equation}
\sigma_1(D)=\sqrt{-1}c(\xi);~~~~ \sigma_0(D)
=-\frac{1}{4}\sum_{i,s,t}\omega_{s,t}(\widetilde{e_i})c(\widetilde{e_i})c(\widetilde{e_s})c(\widetilde{e_t}),
\end{equation}
where $\xi=\sum^n_{i=1}\xi_idx_i$ denotes the cotangent vector.

By Lemma 2.1 in \cite{Wa3} , we have

\begin{lem}
\begin{eqnarray}
\sigma_{-1}(D^{-1})&=&\frac{\sqrt{-1}c(\xi)}{|\xi|^2}; \\
\sigma_{-2}(D^{-1})&=&\frac{c(\xi)\sigma_{0}(D)c(\xi)}{|\xi|^4}+\frac{c(\xi)}{|\xi|^6}\sum_jc(dx_j)
\Big(\partial_{x_j}[c(\xi)]|\xi|^2-c(\xi)\partial_{x_j}(|\xi|^2)\Big) ,
\end{eqnarray}
where $\sigma_{0}(D)=-\frac{1}{4}\sum_{s,t}\omega_{s,t}(\widetilde{e_i})c(\widetilde{e_i})c(\widetilde{e_s})c(\widetilde{e_t})$.
\end{lem}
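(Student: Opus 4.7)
The plan is to derive both formulas by applying the standard symbol-composition asymptotic expansion to the identity $D\circ D^{-1}=\mathrm{Id}$ and then separating the resulting series by homogeneity in $\xi$. Concretely, one starts from
\[
\sigma(D\circ D^{-1})\sim\sum_{\alpha}\frac{(-i)^{|\alpha|}}{\alpha!}\partial_{\xi}^{\alpha}\sigma(D)\,\partial_{x}^{\alpha}\sigma(D^{-1})=1,
\]
substitutes $\sigma(D)=\sigma_{1}(D)+\sigma_{0}(D)$ together with the unknown expansion $\sigma(D^{-1})\sim\sigma_{-1}(D^{-1})+\sigma_{-2}(D^{-1})+\cdots$, and collects terms of each degree.

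At homogeneity $0$ the only contribution is $\sigma_{1}(D)\,\sigma_{-1}(D^{-1})=1$. Inserting $\sigma_{1}(D)=\sqrt{-1}c(\xi)$ and using the Clifford identity $c(\xi)^{2}=-|\xi|^{2}$, the inverse is immediate:
\[
\sigma_{-1}(D^{-1})=\big(\sqrt{-1}c(\xi)\big)^{-1}=\frac{\sqrt{-1}c(\xi)}{|\xi|^{2}},
\]
which gives the first formula.

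At homogeneity $-1$ the composition equation reads
\[
\sigma_{1}(D)\sigma_{-2}(D^{-1})+\sigma_{0}(D)\sigma_{-1}(D^{-1})-i\sum_{j}\partial_{\xi_{j}}\sigma_{1}(D)\,\partial_{x_{j}}\sigma_{-1}(D^{-1})=0.
\]
I would solve for $\sigma_{-2}(D^{-1})$ by multiplying on the left by $\sigma_{-1}(D^{-1})$ (so that $\sigma_{-1}(D^{-1})\sigma_{1}(D)=1$), using $\partial_{\xi_{j}}\sigma_{1}(D)=\sqrt{-1}c(dx_{j})$. This yields
\[
\sigma_{-2}(D^{-1})=-\sigma_{-1}(D^{-1})\sigma_{0}(D)\sigma_{-1}(D^{-1})+i\sum_{j}\sigma_{-1}(D^{-1})\,\sqrt{-1}c(dx_{j})\,\partial_{x_{j}}\sigma_{-1}(D^{-1}).
\]
Plugging in $\sigma_{-1}(D^{-1})=\sqrt{-1}c(\xi)/|\xi|^{2}$ and the quotient rule for $\partial_{x_{j}}\bigl(c(\xi)/|\xi|^{2}\bigr)$ produces the stated expression, since the phase factors $(\sqrt{-1})^{2}$ in the first term give the prefactor $+c(\xi)\sigma_{0}(D)c(\xi)/|\xi|^{4}$ and the combination $i\cdot(\sqrt{-1})^{3}=+1$ in the second term gives the clean sum $\sum_{j}c(dx_{j})(\partial_{x_{j}}[c(\xi)]|\xi|^{2}-c(\xi)\partial_{x_{j}}(|\xi|^{2}))$.

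The work here is routine rather than deep; the only thing to watch is the bookkeeping of the four factors of $\sqrt{-1}$ and the correct $x$-dependence in $c(\xi)=\sum_{i}\xi_{i}c(dx_{i})$ and in $|\xi|^{2}=g^{ij}(x)\xi_{i}\xi_{j}$, which both carry nontrivial derivatives in $x_{j}$ through the metric. Once the signs are tracked, the formulas drop out directly from Lemma 2.1 of \cite{Wa3}, which is the specialization of the general $D^{-1}$-recursion we have just set up.
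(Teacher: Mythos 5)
Your derivation is correct: the degree-$0$ and degree-$(-1)$ terms of the composition expansion for $D\circ D^{-1}=\mathrm{Id}$ give exactly the recursion $\sigma_{-1}(D^{-1})=\sigma_1(D)^{-1}$ and $\sigma_{-2}(D^{-1})=-\sigma_1(D)^{-1}\big[\sigma_0(D)\sigma_{-1}(D^{-1})+\sum_j\partial_{\xi_j}\sigma_1(D)D_{x_j}\sigma_{-1}(D^{-1})\big]$, and your bookkeeping of the factors of $\sqrt{-1}$ checks out. This is precisely the argument behind the cited Lemma 2.1 of \cite{Wa3} (the paper itself gives no proof here), and it is the same parametrix recursion the paper writes out explicitly for $\sigma(D^{-3})$ in Section 5, so your route coincides with the paper's.
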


 Since $\Phi$ is a global form on $\partial M$, so for any fixed point $x_0\in\partial M$, we can choose the normal coordinates
$U$ of $x_0$ in $\partial M$ (not in $M$) and compute $\Phi(x_0)$ in the coordinates $\widetilde{U}=U\times [0,1)\subset M$ and the
metric $ g^M=\frac{1}{\varphi(x_n)}g^{\partial M}+\psi(x_n)dx_n^2$. The dual metric of $g^M$ on $\widetilde{U}$ is
$\varphi(x_n) g^{\partial M}+\frac{1}{\psi(x_n)}dx_n^2.$
Write $g^M_{ij}=g^M(\frac{\partial}{\partial x_i},\frac{\partial}{\partial x_j});~ g_M^{ij}=g^M(dx_i,dx_j)$, then

\begin{equation}
[g^M_{i,j}]= \Big[\begin{array}{lcr}
  \frac{1}{\varphi(x_n)}[g_{i,j}^{\partial M}]  & 0  \\
   0  &  \psi(x_n)
\end{array}\Big];~~~
[g_M^{i,j}]= \Big[\begin{array}{lcr}
  \varphi(x_n)[g^{i,j}_{\partial M}]  & 0  \\
   0  &  \frac{1}{\psi(x_n)}
\end{array}\Big],
\end{equation}
and
 \begin{equation}
\partial_{x_s}g_{ij}^{\partial M}(x_0)=0, 1\leq i,j\leq n-1; ~~~g_{ij}^M(x_0)=\delta_{ij}.
\end{equation}

Let $n=4$ and $\{e_1,\cdots,e_{n-1}\}$ be an orthonormal frame field in $U$ about $g^{\partial M}$ which is parallel along geodesics and
$e_i(x_0)=\frac{\partial}{\partial x_i}(x_0)$, then $\{\widetilde{e_1}=\sqrt{\varphi(x_n)}e_1,\cdots,
\widetilde{e_{n-1}}=\sqrt{\varphi(x_n)}e_{n-1},
\widetilde{e_n}=\frac{1}{\sqrt{\psi(x_n)}}dx_n\}$ is the orthonormal frame field in $\widetilde U$ about $g^M$. Locally $S(TM)|_{\widetilde {U}}\cong
\widetilde {U}\times\wedge^* _{\bf C}(\frac{n}{2}).$
Let $\{f_1,\cdots,f_4\}$ be the orthonormal basis of $\wedge^* _{\bf C}(\frac{n}{2})$.
Take a spin frame field $\sigma:~\widetilde {U}\rightarrow {\rm Spin}(M)$ such that $\pi\sigma=
\{\widetilde{e_1},\cdots,\widetilde{e_n}\}$, where $\pi :~{\rm Spin}(M)\rightarrow O(M)$ is a double covering,
 then $\{[(\sigma,f_i)],~1\leq i\leq 4\}$ is an orthonormal frame of $S(TM)|_{\widetilde {U}}.$ In the following, since the global form
$\Phi$ is independent of the choice of the local frame, so we can compute ${\rm tr}_{S(TM)}$ in the frame $\{[(\sigma,f_i)],~1\leq
i\leq 4\}.$ Let $\{E_1,\cdots,E_n\}$ be the canonical basis of
${\bf R}^n$ and $c(E_i)\in {\rm cl}_{\bf C}(n)\cong {\rm Hom}(\wedge^*_{\bf C}(\frac{n}{2}),\wedge^* _{\bf C}(\frac{n}{2}))$
be the Clifford action. By \cite{Wa3}, \cite{Wa4} and \cite{Y} , we have
\begin{equation}
c( \widetilde{e_i})=[(\sigma,c(E_i))];~ c( \widetilde{e_i})[(\sigma,f_i)]=[(\sigma,c(E_i)f_i)];~
\frac{\partial}{\partial x_i}=[(\sigma,\frac{\partial}{\partial
x_i})],
\end{equation}
then we have $\frac{\partial}{\partial x_i}c( \widetilde{e_i})=0$ in the above frame.
Therefore, we obtain

\begin{lem} For $n$-dimensional spin manifolds with boundary,
\begin{eqnarray}
\partial_{x_j}(|\xi|_{g^M}^2)(x_0)&=&\left\{
       \begin{array}{c}
        0,  ~~~~~~~~~~ ~~~~~~~~~~ ~~~~~~~~~~~~~~~~~~~~{\rm if }~j<n; \\[2pt]
        \varphi'(0)|\xi'|^2_{g^{\partial M}}-\psi'(0)\xi_n^{2}, ~~~~~~~~~~~~~~~{\rm if }~j=n.
       \end{array}
    \right. \\
\partial_{x_j}[c(\xi)](x_0)&=&\left\{
       \begin{array}{c}
      0,  ~~~~~~~~~~ ~~~~~~~~~~ ~~~~~~~~~~~~~~~~~~~~{\rm if }~j<n;\\[2pt]
   \partial_{x_n}[c(\xi')](x_0)
  +\xi_n\partial_{x_n}[c(dx_{n}](x_0) , ~~{\rm if }~j=n.
       \end{array}
    \right.
\end{eqnarray}
 where $\xi=\xi'+\xi_ndx_n$.
\end{lem}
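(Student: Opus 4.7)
Both formulae follow from the block-diagonal form of $g_M^{ij}$ recorded just above the lemma, together with the normal-coordinate identity $\partial_{x_s}g^{\partial M}_{ij}(x_0)=0$ for $s<n$.

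For the first formula, I will expand
\begin{equation*}
|\xi|^2_{g^M}\;=\;\varphi(x_n)\sum_{i,j\le n-1}g^{ij}_{\partial M}(x')\,\xi_i\xi_j\;+\;\frac{1}{\psi(x_n)}\xi_n^2 .
\end{equation*}
For $j<n$, the $x_n$-only factors $\varphi$ and $1/\psi$ are not touched by $\partial_{x_j}$, and the remaining term vanishes at $x_0$ by the normal-coordinate hypothesis. For $j=n$, the tangential sum is constant in $x_n$ and is evaluated at $x_0$ using $g^{ij}_{\partial M}(x_0)=\delta_{ij}$, while $\varphi'(0)$ and $\bigl(1/\psi\bigr)'(0)=-\psi'(0)$ come from $\varphi(0)=\psi(0)=1$; this produces exactly $\varphi'(0)|\xi'|^2_{g^{\partial M}}-\psi'(0)\xi_n^2$.

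For the second formula, decompose $\xi=\xi'+\xi_n\,dx_n$, so that
\begin{equation*}
c(\xi)\;=\;c(\xi')\;+\;\xi_n\,c(dx_n).
\end{equation*}
The case $j=n$ is then exactly the claimed splitting (the cotangent coefficients $\xi_i$ are independent of $x$). For $j<n$, the key point is that in the spin frame $\{[(\sigma,f_i)]\}$ the Clifford matrices $c(\widetilde{e_i})=[(\sigma,c(E_i))]$ are literally constant in $x$, so any $x_j$-dependence of $c(dx_k)$ is carried by the transition coefficients between $\{dx_k\}$ and the orthonormal coframe $\{\widetilde{e_i}\}$. For $k<n$, this transition factors as $\sqrt{\varphi(x_n)}$ times the transition on $\partial M$, whose $x_j$-derivative at $x_0$ vanishes because $\{e_1,\dots,e_{n-1}\}$ was chosen parallel along the geodesics of $g^{\partial M}$ emanating from $x_0$; for $k=n$ the coefficient is $\sqrt{\psi(x_n)}$, depending only on $x_n$. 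Hence $\partial_{x_j}c(\xi)(x_0)=0$ for $j<n$.

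The only step that is not automatic is verifying that the transition coefficients between $\{dx_k\}_{k<n}$ and $\{\widetilde{e_k}\}_{k<n}$ have vanishing $x_j$-derivative at $x_0$ for $j<n$; this is a direct Gauss-lemma-type consequence of the parallel-frame choice in the normal coordinates on $\partial M$, and is the only place where the normal-coordinate setup is genuinely used for the second formula. No analytic difficulty arises; both identities are essentially coordinate bookkeeping anchored to the boundary normal-coordinate identities at $x_0$.
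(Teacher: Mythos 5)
Your proposal is correct, and for the first identity it is essentially the paper's own argument: the paper's proof consists of one line, expanding $|\xi|^2_{g^M}$ against the block-diagonal dual metric and invoking the normal-coordinate identity (3.7). One small point in your favor: the paper's displayed expansion writes the normal term as $\psi(x_n)\xi_n^2$, whereas the dual metric gives $\tfrac{1}{\psi(x_n)}\xi_n^2$ as you have it; your version is the one consistent with the sign $-\psi'(0)\xi_n^2$ in the lemma's statement, so you have in effect corrected a typo rather than introduced a discrepancy. For the second identity the paper gives no argument at all --- it simply cites Lemma A.1 of the reference [Wa3] --- while you supply the underlying reasoning: the Clifford matrices $c(\widetilde{e_i})=[(\sigma,c(E_i))]$ are constant in the chosen spin frame, so all $x$-dependence of $c(dx_k)$ sits in the transition coefficients between the coordinate coframe and the orthonormal coframe, and those coefficients have vanishing tangential derivatives at $x_0$ by the parallel-frame/normal-coordinate choice, the normal ones depending only on $x_n$. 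That is exactly the content of the cited lemma, so your route is the same in substance but self-contained, which is a genuine improvement in readability. (A cosmetic slip: since $|dx_n|^2_{g_M}=1/\psi(x_n)$, the transition coefficient for $dx_n$ is $1/\sqrt{\psi(x_n)}$ rather than $\sqrt{\psi(x_n)}$; it depends only on $x_n$ either way, so the conclusion for $j<n$ is unaffected.)
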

\begin{proof}
By the equality
  $\partial_{x_j}(|\xi|_{g^M}^2)(x_0)=\partial_{x_j}\big(\varphi(x_n)g^{l,m}_{\partial M}(x')\xi_l\xi_m+\psi(x_n)\xi_n^2\big)(x_0)$ and
(3.7), then (3.9) is correct. By Lemma A.1 in \cite{Wa3}, (3.10) is correct.
\end{proof}
 In order to compute $\sigma_{0}(D)(x_0)$, we need to compute $\omega_{s,t}(\widetilde{e_i})(x_0)$.
 By Appendix in \cite{Wa3}, we have

\begin{lem}  For $n$-dimensional spin manifolds with boundary,
When $i<n$ ,~$\omega_{n,i}(\widetilde{e_i})(x_0)=\frac{1}{2} \varphi'(0);$ and $\omega_{i,n}(\widetilde{e_i})(x_0)=-\frac{1}{2} \varphi'(0).$
 In other cases, $\omega_{s,t}(\widetilde{e_i})(x_0)=0$.
\end{lem}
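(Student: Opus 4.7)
The plan is to read off $\omega_{s,t}(\widetilde{e_i})(x_0)$ directly from the Koszul formula. Since $\{\widetilde{e_1},\ldots,\widetilde{e_n}\}$ is orthonormal, the inner products $g^M(\widetilde{e_j},\widetilde{e_k})$ are constant and the connection matrix is antisymmetric, so all three derivative terms in Koszul drop out and I am left with
\begin{equation*}
2\omega_{s,t}(\widetilde{e_i})=g^M([\widetilde{e_i},\widetilde{e_t}],\widetilde{e_s})
-g^M([\widetilde{e_i},\widetilde{e_s}],\widetilde{e_t})
-g^M([\widetilde{e_t},\widetilde{e_s}],\widetilde{e_i}).
\end{equation*}
Thus the whole calculation reduces to computing the Lie brackets $[\widetilde{e_a},\widetilde{e_b}](x_0)$ among the frame vectors.

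The first step is to extend the boundary frame $\{e_1,\ldots,e_{n-1}\}$ to $\widetilde{U}=U\times[0,1)$ by making each $e_i$ independent of $x_n$, so that $[e_i,\partial_{x_n}]=0$. Combined with the fact that $e_i$ is parallel along geodesics in $\partial M$ with $e_i(x_0)=\partial_{x_i}$ (forcing $[e_i,e_j](x_0)=0$), the identity $[fX,gY]=fg[X,Y]+f(Xg)Y-g(Yf)X$ applied to $\widetilde{e_i}=\sqrt{\varphi(x_n)}\,e_i$ and $\widetilde{e_n}=\frac{1}{\sqrt{\psi(x_n)}}\partial_{x_n}$ gives, at $x_0$ where $\varphi(0)=\psi(0)=1$,
\begin{equation*}
[\widetilde{e_i},\widetilde{e_j}](x_0)=0 \quad (i,j<n),\qquad
[\widetilde{e_i},\widetilde{e_n}](x_0)=-\tfrac{1}{2}\varphi'(0)\,\widetilde{e_i} \quad (i<n).
\end{equation*}
Note that the coefficient of $\partial_{x_n}$ in the second bracket vanishes because $e_i(1/\sqrt{\psi})=0$, which is exactly why $\psi'(0)$ never enters the final answer.

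Plugging these brackets back into the Koszul formula then handles the cases mechanically. If all three indices $s,t,i$ are less than $n$, every bracket contributes zero and $\omega_{s,t}(\widetilde{e_i})(x_0)=0$. If $s=n$ and $t,i<n$, only the last two bracket terms survive and each contributes $\tfrac12\varphi'(0)\delta_{it}$, giving $\omega_{n,t}(\widetilde{e_i})(x_0)=\tfrac12\varphi'(0)\delta_{it}$, which specializes to $\omega_{n,i}(\widetilde{e_i})(x_0)=\tfrac12\varphi'(0)$. The case $s,t<n$, $i=n$ gives zero after cancellation, and the cases involving $\widetilde{e_n}$ twice vanish by antisymmetry. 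Finally, $\omega_{i,n}(\widetilde{e_i})(x_0)=-\omega_{n,i}(\widetilde{e_i})(x_0)=-\tfrac12\varphi'(0)$ follows from antisymmetry.

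The only real obstacle is bookkeeping: making sure the extension of $e_i$ off the boundary is consistent, tracking signs from $[fX,gY]$, and verifying that no choice of $(s,t,i)$ outside the two listed cases produces a nonzero contribution. Once the brackets above are in hand, everything else is a three-line substitution.
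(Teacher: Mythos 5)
Your computation is correct, and it arrives at exactly the values the paper asserts; I also checked that your brackets $[\widetilde{e_i},\widetilde{e_n}](x_0)=-\tfrac12\varphi'(0)\widetilde{e_i}$ and the resulting $\omega_{n,t}(\widetilde{e_i})(x_0)=\tfrac12\varphi'(0)\delta_{it}$ are consistent with the paper's downstream use in Lemma 3.4, where $\sigma_0(D)(x_0)=-\tfrac34\varphi'(0)c(dx_n)$ for $n=4$. The route is genuinely different from the paper's, though: the paper offers no argument at all here, simply invoking the Appendix of \cite{Wa3}, where the analogous statement is obtained by computing the Christoffel symbols $\Gamma^k_{ij}$ of the coordinate frame for the (product-type) metric and then translating to the orthonormal frame; your proof instead works entirely in the orthonormal frame via the Koszul formula reduced to Lie brackets. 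What your approach buys is self-containedness and transparency -- in particular it makes visible, without any Christoffel bookkeeping, why $\psi'(0)$ never appears (because $e_i(1/\sqrt{\psi})=0$ kills the $\partial_{x_n}$-component of $[\widetilde{e_i},\widetilde{e_n}]$), which is the one feature of this conformal Robertson--Walker metric that differs from the $\psi\equiv 1$ case treated in \cite{Wa3} and therefore the one point that actually needs checking. Two small things you should make explicit rather than leave as ``bookkeeping'': (i) the sign convention, i.e.\ that the paper's definition $\nabla^L(\widetilde{e_1},\dots,\widetilde{e_n})=(\widetilde{e_1},\dots,\widetilde{e_n})(\omega_{s,t})$ means $\omega_{s,t}(X)=g^M(\nabla^L_X\widetilde{e_t},\widetilde{e_s})$, since the claimed values are antisymmetric in $(s,t)$ and a reversed convention would flip them; and (ii) the case $s=n$, $t<n$, $i=n$, i.e.\ $\omega_{n,t}(\widetilde{e_n})$, which is not covered by ``$\widetilde{e_n}$ appearing twice forces vanishing by antisymmetry'' -- it vanishes because each surviving Koszul term pairs $\widetilde{e_t}$ against $\widetilde{e_n}$. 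Neither point is a gap in substance, only in exposition.
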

Combining (3.3) and Lemma 3.3, we obtain

\begin{lem} For $4$-dimensional spin manifolds with boundary,
 \begin{equation}
\sigma_{0}(D)(x_0)=-\frac{3}{4} \varphi'(0)c(dx_n).
\end{equation}
\end{lem}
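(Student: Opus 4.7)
The plan is to plug the formula (3.3) for $\sigma_0(D)$ into the frame data supplied by Lemma 3.3, exploiting the Clifford relations to collapse the triple products $c(\widetilde{e_i})c(\widetilde{e_s})c(\widetilde{e_t})$ at $x_0$. First I would note that at $x_0$, since $\varphi(0)=\psi(0)=1$, the orthonormal frame $\{\widetilde{e_i}\}$ satisfies $c(\widetilde{e_i})(x_0)=c(E_i)$; in particular $c(\widetilde{e_n})(x_0)=c(dx_n)$. By Lemma 3.3 the only nonvanishing values of $\omega_{s,t}(\widetilde{e_i})(x_0)$ are $\omega_{n,i}(\widetilde{e_i})(x_0)=\tfrac12\varphi'(0)$ and $\omega_{i,n}(\widetilde{e_i})(x_0)=-\tfrac12\varphi'(0)$ for $1\le i\le n-1$, so the triple sum in (3.3) reduces to a single sum over $i<n$.

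Next I would compute the two surviving Clifford products for each such $i$. Using $c(\widetilde{e_i})^2=-1$ together with the anticommutation $c(\widetilde{e_i})c(dx_n)=-c(dx_n)c(\widetilde{e_i})$ for $i<n$, one gets
\begin{equation*}
c(\widetilde{e_i})c(dx_n)c(\widetilde{e_i})=-c(dx_n)c(\widetilde{e_i})^2=c(dx_n),\qquad c(\widetilde{e_i})c(\widetilde{e_i})c(dx_n)=-c(dx_n).
\end{equation*}
Hence the $(s,t)=(n,i)$ contribution is $\tfrac12\varphi'(0)\,c(dx_n)$ and the $(s,t)=(i,n)$ contribution is $-\tfrac12\varphi'(0)\cdot(-c(dx_n))=\tfrac12\varphi'(0)\,c(dx_n)$, giving $\varphi'(0)\,c(dx_n)$ per index $i$.

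Finally I would sum over the $n-1=3$ values of $i$ and multiply by the prefactor $-\tfrac14$ from (3.3), obtaining $\sigma_0(D)(x_0)=-\tfrac14\cdot 3\varphi'(0)\,c(dx_n)=-\tfrac34\varphi'(0)\,c(dx_n)$, as claimed. The computation is essentially mechanical; the only point requiring care is tracking the sign conventions in the Clifford relations and making sure no $(s,t)$ pair with $s$ or $t$ equal to the frame index $i$ is double-counted, but Lemma 3.3 makes the bookkeeping transparent, so no genuine obstacle arises.
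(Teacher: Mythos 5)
Your proposal is correct and is exactly the computation the paper intends by ``Combining (3.3) and Lemma 3.3'': substitute the nonzero connection coefficients from Lemma 3.3 into the formula for $\sigma_0(D)$, collapse the triple Clifford products via $c(\widetilde{e_i})^2=-1$ and anticommutativity, and sum over the $n-1=3$ tangential indices. The signs and the final coefficient $-\tfrac34\varphi'(0)$ all check out.
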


 Now we can compute $\Phi$ (see formula (2.4) for the definition of $\Phi$), since the sum is taken over $
-r-l+k+j+|\alpha|=3,~~r,~l\leq -1,$ then we have the following five cases:

{\bf case a)~I)}~$r=-1,~l=-1,~k=j=0,~|\alpha|=1$

From (2.4) we have
 \begin{equation}
{\rm case~a)~I)}=-\int_{|\xi'|=1}\int^{+\infty}_{-\infty}\sum_{|\alpha|=1}
 {\rm trace}[\partial^\alpha_{\xi'}\pi^+_{\xi_n}\sigma_{-1}(D^{-1})\times
 \partial^\alpha_{x'}\partial_{\xi_n}\sigma_{-1}(D^{-1})](x_0)d\xi_n\sigma(\xi')dx'.
\end{equation}
By Lemma 3.2, for $i<n$, then
\begin{eqnarray}
\partial_{x_i}\sigma_{-1}(D^{-1})(x_0)&=&\partial_{x_i}\left(\frac{\sqrt{-1}c(\xi)}{|\xi|^2}\right)(x_0) \nonumber\\
&=&\frac{\sqrt{-1}\partial_{x_i}[c(\xi)](x_0)}{|\xi|^2}
-\frac{\sqrt{-1}c(\xi)\partial_{x_i}(|\xi|^2)(x_0)}{|\xi|^4}\nonumber\\
&=&0.
\end{eqnarray}
Then case a) I) vanishes.

 {\bf case a)~II)}~$r=-1,~l=-1, ~k=|\alpha|=0,~j=1$

From (2.4) we have
 \begin{equation}
{\rm case~ a)~II)}=-\frac{1}{2}\int_{|\xi'|=1}\int^{+\infty}_{-\infty} {\rm
trace} [\partial_{x_n}\pi^+_{\xi_n}\sigma_{-1}(D^{-1})\times
\partial_{\xi_n}^2\sigma_{-1}(D^{-1})](x_0)d\xi_n\sigma(\xi')dx'.
\end{equation}
By Lemma 3.1 and Lemma 3.2, we have
\begin{eqnarray}
\partial^2_{\xi_n}\sigma_{-1}(D^{-1})&=&\sqrt{-1}\Big(-\frac{6\xi_nc(dx_n)+2c(\xi')}
{|\xi|^4}+\frac{8\xi_n^2c(\xi)}{|\xi|^6}\Big) \nonumber\\
&=&\frac{6i\xi_n^{2}-2i}{(1+\xi_n^{2})^3}c(\xi')+\frac{2i\xi_n^{3}-6i\xi_n}{(1+\xi_n^{2})^3}c(dx_n),
\end{eqnarray}
and
\begin{eqnarray}
\partial_{x_n}\sigma_{-1}(D^{-1})(x_0)
&=&\frac{\sqrt{-1}\partial_{x_n}[c(\xi)](x_0)}{|\xi|^2}
-\frac{\sqrt{-1}c(\xi)\partial_{x_n}(|\xi|^2)(x_0)}{|\xi|^4}\nonumber\\
&=&\frac{i\partial_{x_n}[c(\xi')](x_0)}{|\xi|^2}+\frac{i\xi_n\partial_{x_n}[c(dx_n)](x_0)}{|\xi|^2}
-\frac{\big(i\varphi'(0)-i\xi_n^{2}\psi'(0)\big)c(\xi)}{|\xi|^4}.
\end{eqnarray}
By (2.1.1) in \cite{Wa3}, (3.3) in \cite{WW} and the Cauchy integral formula, then
\begin{eqnarray}
\pi^+_{\xi_n}\left[\frac{c(\xi)}{|\xi|^4}\right](x_0)|_{|\xi'|=1}&=&\pi^+_{\xi_n}\left[\frac{c(\xi')+\xi_nc(dx_n)}{(1+\xi_n^2)^2}\right] \nonumber\\
&=&\frac{1}{2\pi i}\lim_{u\rightarrow
0^-}\int_{\Gamma^+}\frac{\frac{c(\xi')+\eta_nc(dx_n)}{(\eta_n+i)^2(\xi_n+iu-\eta_n)}}
{(\eta_n-i)^2}d\eta_n   \nonumber\\
&=&\left[\frac{c(\xi')+\eta_nc(dx_n)}{(\eta_n+i)^2(\xi_n-\eta_n)}\right]^{(1)}\mid_{\eta_n=i} \nonumber\\
&=&\frac{-2-i\xi_n}{4(\xi_n-i)^{2}}c(\xi')-\frac{i}{4(\xi_n-i)^2}c(dx_n).
\end{eqnarray}
Similarly,
\begin{equation}
\pi^+_{\xi_n}\left[\frac{i\partial_{x_n}[c(\xi')]}{|\xi|^2}\right](x_0)|_{|\xi'|=1}
=\frac{\partial_{x_n}[c(\xi')](x_0)}{2(\xi_n-i)},
\end{equation}
\begin{equation}
\pi^+_{\xi_n}\left[\frac{i\xi_n\partial_{x_n}[c(dx_n)]}{|\xi|^2}\right](x_0)|_{|\xi'|=1}
=\frac{i\partial_{x_n}[c(dx_n)](x_0)}{2(\xi_n-i)},
\end{equation}
and
\begin{equation}
\pi^+_{\xi_n}\left[\frac{\xi_n^{2}c(\xi)}{|\xi|^4}\right](x_0)|_{|\xi'|=1}
=\frac{-i\xi_n}{4(\xi_n-i)^{2}}c(\xi')+\frac{2\xi_n-i}{4(\xi_n-i)^2}c(dx_n).
\end{equation}
Combining (3.17)-(3.21), we obtain
\begin{eqnarray}
\partial_{x_n}\pi^+_{\xi_n}\sigma_{-1}(D^{-1})(x_0)|_{|\xi'|=1}
&=&\frac{\partial_{x_n}[c(\xi')](x_0)}{2(\xi_n-i)}+\frac{i\partial_{x_n}[c(dx_n)](x_0)}{2(\xi_n-i)}\nonumber\\
&&+\frac{(2i-\xi_n)\varphi'(0)+\xi_n\psi'(0)}{4(\xi_n-i)^{2}}c(\xi')\nonumber\\
&&+\frac{-\varphi'(0)+(1+2i\xi_n)\psi'(0)}{4(\xi_n-i)^2}c(dx_n).
\end{eqnarray}
Since $n=4$, ${\rm tr}_{S(TM)}[{\rm id}]={\rm dim}(\wedge^*(2))=4$.
By the relation of the Clifford action and ${\rm tr}{AB}={\rm tr }{BA}$, we have the equalities:
\begin{eqnarray}
&&{\rm tr}[c(\xi')c(dx_n)]=0;~~{\rm tr}[c(dx_n)^2]=-4;~~{\rm tr}[c(\xi')^2](x_0)|_{|\xi'|=1}=-4; \nonumber\\
&&{\rm tr}[\partial_{x_n}[c(\xi')]c(dx_n)]=0;~~{\rm tr}[\partial_{x_n}[c(\xi')]c(\xi')](x_0)|_{|\xi'|=1}=-2\varphi'(0);\nonumber\\
&&{\rm tr}[\partial_{x_n}[c(dx_n)]c(\xi')]=0;~~{\rm tr}[\partial_{x_n}[c(dx_n)]c(dx_n)](x_0)|_{|\xi'|=1}=2\psi'(0).
\end{eqnarray}
From (3.16), (3.22)and (3.23), we have
\begin{eqnarray}
&&{\rm tr}\Big\{\Big[\frac{\partial_{x_n}[c(\xi')](x_0)}{2(\xi_n-i)}+\frac{i\partial_{x_n}[c(dx_n)](x_0)}{2(\xi_n-i)}
+\frac{(2i-\xi_n)\varphi'(0)+\xi_n\psi'(0)}{4(\xi_n-i)^{2}}c(\xi')
+\frac{-\varphi'(0)+(1+2i\xi_n)\psi'(0)}{4(\xi_n-i)^2}c(dx_n)\Big]\nonumber\\
&&~~~~\times
\Big[\frac{6i\xi_n^{2}-2i}{(1+\xi_n^{2})^3}c(\xi')+\frac{2i\xi_n^{3}-6i\xi_n}{(1+\xi_n^{2})^3}c(dx_n)\Big]
\Big\}(x_0)|_{|\xi'|=1} \nonumber\\
&=&\frac{2\big(i\varphi'(0)+\xi_n\psi'(0)\big)}{(\xi_n-i)^2(\xi_n+i)^3}.
\end{eqnarray}
Substituting (3.24) into (3.15), we have
\begin{eqnarray}
{\rm case~ a)~II)}&=&-\int_{|\xi'|=1}\int^{+\infty}_{-\infty}
 \frac{i\varphi'(0)+\xi_n\psi'(0)}{(\xi_n-i)^2(\xi_n+i)^3}d\xi_n\sigma(\xi')dx' \nonumber\\
&=&-\Omega_3\int_{\Gamma^+} \frac{i\varphi'(0)+\xi_n\psi'(0)}{(\xi_n-i)^2(\xi_n+i)^3}d\xi_ndx'\nonumber\\
&=&-\Omega_3 2\pi i\Big[\frac{i\varphi'(0)+\xi_n\psi'(0)}{(\xi_n+i)^3}\Big]^{(1)}|_{\xi_n=i}dx'\nonumber\\
&=&-\frac{1}{8}\big(3\varphi'(0)+\psi'(0)\big)\pi \Omega_3dx',
\end{eqnarray}
 where $\Omega_4$ is the canonical volume of $S^4$.

{\bf case a)~III)}~$r=-1,~l=-1,~j=|\alpha|=0,~k=1$\\

From (2.4) we have
 \begin{equation}
{\rm case~ a)~III)}=-\frac{1}{2}\int_{|\xi'|=1}\int^{+\infty}_{-\infty}
{\rm trace} [\partial_{\xi_n}\pi^+_{\xi_n}\sigma_{-1}(D^{-1})\times
\partial_{\xi_n}\partial_{x_n}\sigma_{-1}(D^{-1})](x_0)d\xi_n\sigma(\xi')dx'.
\end{equation}
By (2.2.29) in \cite{Wa3}, we have
 \begin{equation}
\partial_{\xi_n}\pi^+_{\xi_n}\sigma_{-1}(D^{-1})(x_0)|_{|\xi'|=1}=\frac{-c(\xi')}{2(\xi_n-i)^2}+\frac{-ic(dx_n)}{2(\xi_n-i)^2}.
\end{equation}
From (3.5) we have
\begin{eqnarray}
\partial_{\xi_n}\partial_{x_n}\sigma_{-1}(D^{-1})(x_0)|_{|\xi'|=1}
&=&\frac{-2i\xi_n}{(1+\xi_n^{2})^2}\partial_{x_n}[c(\xi')](x_0)+\frac{i-i\xi_n^{2}}{(1+\xi_n^{2})^2}\partial_{x_n}[c(dx_n)](x_0)\nonumber\\
&&+\frac{(3i\xi_n^{2}-i)\varphi'(0)+(3i\xi_n^{2}-i\xi_n^{4})\psi'(0)}{(1+\xi_n^{2})^3}c(dx_n)\nonumber\\
&&+\frac{4i\xi_n\varphi'(0)+(2i\xi_n-2i\xi_n^{3})\psi'(0)}{(1+\xi_n^{2})^3}c(\xi').
\end{eqnarray}
Combining (3.27) and (3.28), we obtain
\begin{eqnarray}
&&{\rm  tr}\Big\{\Big[\frac{-c(\xi')-ic(dx_n)}{2(\xi_n-i)^2}\Big]\times
\Big[\frac{-2i\xi_n}{(1+\xi_n^{2})^2}\partial_{x_n}[c(\xi')](x_0)+\frac{i-i\xi_n^{2}}{(1+\xi_n^{2})^2}\partial_{x_n}[c(dx_n)](x_0)\nonumber\\
&&~~~+\frac{(3i\xi_n^{2}-i)\varphi'(0)+(3i\xi_n^{2}-i\xi_n^{4})\psi'(0)}{(1+\xi_n^{2})^3}c(dx_n)
+\frac{4i\xi_n\varphi'(0)+(2i\xi_n-2i\xi_n^{3})\psi'(0)}{(1+\xi_n^{2})^3}c(\xi')
\Big]
\Big\}(x_0)|_{|\xi'|=1} \nonumber\\
&=&\frac{-2i\varphi'(0)+(\xi_n-i)\psi'(0)}{(\xi_n-i)^2(\xi_n+i)^3}.
\end{eqnarray}
Substituting (3.29) into (3.26), one sees that
\begin{eqnarray}
{\rm {\bf case~a)~III)}}&=&
-\frac{1}{2}\int_{|\xi'|=1}\int^{+\infty}_{-\infty}\frac{-2i\varphi'(0)+(\xi_n-i)\psi'(0)}{(\xi_n-i)^2(\xi_n+i)^3}d\xi_n\sigma(\xi')dx' \nonumber\\
&=& -\frac{1}{2}\times2 \pi i\Big[\frac{-2i\varphi'(0)+(\xi_n-i)\psi'(0)}{(\xi_n+i)^3} \Big]^{(1)}|_{\xi_n=i}\Omega_3dx'\nonumber\\
&=&\frac{1}{8}\big(3\varphi'(0)+\psi'(0)\big)\pi \Omega_3dx'.
\end{eqnarray}

{\bf case b)}~$r=-2,~l=-1,~k=j=|\alpha|=0$\\

From (2.4) we have
 \begin{equation}
{\rm case~ b)}=-i\int_{|\xi'|=1}\int^{+\infty}_{-\infty}{\rm trace} [\pi^+_{\xi_n}\sigma_{-2}(D^{-1})\times
\partial_{\xi_n}\sigma_{-1}(D^{-1})](x_0)d\xi_n\sigma(\xi')dx'.
\end{equation}
By Lemma 3.1 and Lemma 3.2, we have
\begin{equation}
\partial_{\xi_n}\sigma_{-1}(D^{-1})=\frac{-2i\xi_n}{(1+\xi_n^{2})^2}c(\xi')+\frac{i-i\xi_n^{2}}{(1+\xi_n^{2})^2}c(dx_n),
\end{equation}
and
\begin{eqnarray}
\sigma_{-2}(D^{-1})(x_0)&=& \frac{c(\xi)\sigma_{0}(D)(x_0)c(\xi)}{|\xi|^4}
+\frac{1}{|\xi|^4}c(\xi)c(dx_n)\Big[\partial_{x_n}[c(\xi')](x_0)+\xi_n\partial_{x_n}[c(dx_n)](x_0)\Big]  \nonumber\\
&&-\frac{1}{|\xi|^6}c(\xi)c(dx_n)c(\xi)\Big[\varphi'(0)-\xi_n^{2}\psi'(0)\Big].
\end{eqnarray}
Then
\begin{eqnarray}
\pi^+_{\xi_n}\sigma_{-2}(D^{-1})(x_0)|_{|\xi'|=1}&=& \pi^+_{\xi_n}\Big[\frac{c(\xi)\sigma_{0}(D)(x_0)c(\xi)}{|\xi|^4}\Big]\nonumber\\
&&+\pi^+_{\xi_n}\Big[\frac{1}{|\xi|^4}c(\xi)c(dx_n)\big[\partial_{x_n}[c(\xi')](x_0)+\xi_n\partial_{x_n}[c(dx_n)](x_0)\big]\Big]  \nonumber\\
&&+\pi^+_{\xi_n}\Big[\frac{1}{|\xi|^6}c(\xi)c(dx_n)c(\xi)\big[\xi_n^{2}\psi'(0)-\varphi'(0)\big]\Big] \nonumber\\
&:=&A+B+C.
\end{eqnarray}
 Similarly to (3.18), by Lemma 3.1 we have
 \begin{equation}
A=\frac{-1}{4(\xi_n-i)^{2}}\Big[\frac{-3\varphi'(0)}{4}(2+i\xi_n)c(\xi')c(dx_n)c(\xi')-\frac{-3i\xi_n\varphi'(0)}{4} c(dx_n)
+\frac{3i\varphi'(0)}{2} c(\xi')\Big].
\end{equation}
And
\begin{eqnarray}
B&=&\frac{-1}{4(\xi_n-i)^{2}}\Big[(2+i\xi_n)c(\xi')c(dx_n)\partial_{x_n}[c(\xi')](x_0)
+ic(\xi')c(dx_n)\partial_{x_n}[c(dx_n)](x_0) \nonumber\\
&&-i\partial_{x_n}[c(\xi')](x_0)
-i\xi_n\partial_{x_n}[c(dx_n)](x_0) \Big],
\end{eqnarray}
\begin{eqnarray}
C&=&\frac{\xi_n}{16(\xi_n-i)^{3}}\Big[-3\varphi'(0)-i\xi_n\varphi'(0)+\psi'(0)+3i\xi_n\psi'(0)  \Big]c(dx_n) \nonumber\\
  &&+\frac{1}{8(\xi_n-i)^{3}}\Big[-3\varphi'(0)-i\xi_n\varphi'(0)+\psi'(0)+3i\xi_n\psi'(0)  \Big]c(\xi')\nonumber\\
  &&+\frac{1}{16(\xi_n-i)^{3}}\Big[-8i\varphi'(0)+9\xi_n \varphi'(0)+3i\xi_n^{2}\varphi'(0)-3\xi_n\psi'(0)-i\xi_n^{2}\psi'(0)
        \Big]c(\xi')c(dx_n)c(\xi').
\end{eqnarray}
 By (3.32) and (3.35)-(3.37), we obtain
\begin{eqnarray}
{\rm tr }[A\times\partial_{\xi_n}q_{-1}(x_0)]|_{|\xi'|=1}&=&\frac{3i\varphi'(0)}{2(1+\xi_n^{2})^{2}};\\
{\rm tr }[B\times\partial_{\xi_n}q_{-1}(x_0)]|_{|\xi'|=1}&=&
 \frac{-2\varphi'(0)-i\xi_n\varphi'(0)+\xi_n^{2}\varphi'(0)+i\xi_n\psi'(0)+\xi_n^{2}\psi'(0)}{2(\xi_n-i)^{3}(\xi_n+i)^{2}};
\\
{\rm tr }[C\times\partial_{\xi_n}q_{-1}(x_0)]|_{|\xi'|=1}&=&
 \frac{4\varphi'(0)+i\xi_n\varphi'(0)-\xi_n^{2}\varphi'(0)-3i\xi_n\psi'(0)-\xi_n^{2}\psi'(0)}{2(\xi_n-i)^{3}(\xi_n+i)^{2}}.
\end{eqnarray}
 Combining (3.31), (3.38), (3.39) and (3.40), we obtain
\begin{eqnarray}
{\bf case~ b)}&=&
 -i \int_{|\xi'|=1}\int^{+\infty}_{-\infty}\frac{-5i\varphi'(0)+3\xi_n\varphi'(0)-2\xi_n\psi'(0)}{-2i(\xi_n-i)^3(\xi_n+i)^2}
  d\xi_n\sigma(\xi')dx' \nonumber\\
 &=& -i \Omega_3  \int_{\Gamma^+} \frac{-5i\varphi'(0)+3\xi_n\varphi'(0)-2\xi_n\psi'(0)}{-2i(\xi_n-i)^3(\xi_n+i)^2}d\xi_ndx'\nonumber\\
&=& -i \Omega_3 \frac{2 \pi i}{2!}\Big[\frac{-5i\varphi'(0)+3\xi_n\varphi'(0)-2\xi_n\psi'(0)}{-2i(\xi_n+i)^2}
     \Big]^{(2)}|_{\xi_n=i}dx'\nonumber\\
&=&\frac{1}{8}\big(9\varphi'(0)-\psi'(0)\big)\pi \Omega_3dx'.
\end{eqnarray}

 {\bf  case c)}~$r=-1,~l=-2,~k=j=|\alpha|=0$\\

From (2.4) we have
 \begin{equation}
{\rm case~ c)}=-i\int_{|\xi'|=1}\int^{+\infty}_{-\infty}{\rm trace} [\pi^+_{\xi_n}\sigma_{-1}(D^{-1})\times
\partial_{\xi_n}\sigma_{-2}(D^{-1})](x_0)d\xi_n\sigma(\xi')dx'.
\end{equation}
By (2.2.44) in \cite{Wa3}, we have
 \begin{equation}
\pi^+_{\xi_n}\sigma_{-1}(D^{-1})(x_0)|_{|\xi'|=1}=\frac{c(\xi')+ic(dx_n)}{2(\xi_n-i)};
\end{equation}
By (3.33) we have
\begin{eqnarray}
\partial_{\xi_n}\sigma_{-2}(D^{-1})(x_0)|_{|\xi'|=1}&=&\frac{1}{(1+\xi_n^{2})^{3}}\Big[ (3\xi_n^{2}-1)\partial_{x_n}[c(\xi')](x_0)
   +(2\xi_n^{3}-2\xi_n)\partial_{x_n}[c(dx_n)](x_0)\nonumber\\
   && +(1-3\xi_n^{2})c(\xi')c(dx_n)\partial_{x_n}[c(dx_n)](x_0)
   -4\xi_n c(\xi')c(dx_n)\partial_{x_n}[c(\xi')](x_0)\Big]\nonumber\\
  &&+\frac{1}{(1+\xi_n^{2})^{4}}\Big[\xi_n\Big(9\varphi'(0)+3\xi_n^{2}\varphi'(0)+2\psi'(0)-4\xi_n^{2}\psi'(0)
  \Big)  c(\xi')c(dx_n)c(\xi')\nonumber\\
  &&-\frac{1}{2}\Big(-7\varphi'(0)+26\xi_n^{2}\varphi'(0)+9\xi_n^{4}\varphi'(0)+12\xi_n^{2}\psi'(0)-12\xi_n^{4}\psi'(0)
  \Big) c(\xi')\nonumber\\
  &&-\frac{\xi_n}{2}\Big(-7\varphi'(0)+8\xi_n^{2}\varphi'(0)+3\xi_n^{4}\varphi'(0)+8\xi_n^{2}\psi'(0)-4\xi_n^{4}\psi'(0)
  \Big) c(dx_n)\Big] . \nonumber\\
\end{eqnarray}
Then similarly to computations of the case b), we have
\begin{eqnarray}
&&{\rm trace} [\pi^+_{\xi_n}\sigma_{-1}(D^{-1})\times \partial_{\xi_n}\sigma_{-2}(D^{-1})](x_0)|_{|\xi'|=1}\nonumber\\
&=&\frac{1}{-i(\xi_n-i)(\xi_n+i)^{4}}
\Big[6i\varphi'(0)+3\xi_n\varphi'(0)+i\psi'(0)-2\xi_n\psi'(0)\Big]
\end{eqnarray}
Combining (3.42) and (3.45), we obtain
  \begin{equation}
{\bf case~ c)}=-\frac{1}{8}\big(9\varphi'(0)-\psi'(0)\big)\pi \Omega_3dx'.
\end{equation}

Since $\Phi$ is the sum of the cases a), b) and c), so $\Phi=0$. Therefore

\begin{thm}
Let  $M$ be a $4$-dimensional
compact spin manifold with the boundary $\partial M$ and the metric
$g^M$ as above and $D$ be the Dirac operator on $\widehat{M}$, then
\begin{equation}
{\rm Vol}^{(1,1)}_4=\widetilde{{\rm Wres}}[\pi^+D^{-1}\circ\pi^+D^{-1}]=-\frac{\Omega_4}{3}\int_Ms{\rm dvol}_M.
\end{equation}
\end{thm}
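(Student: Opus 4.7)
The plan is to use the decomposition in (2.3): the noncommutative residue $\widetilde{{\rm Wres}}[\pi^+D^{-1}\circ\pi^+D^{-1}]$ splits into an interior bulk integral over $M$ and a boundary integral $\int_{\partial M}\Phi$. The interior piece already yields $-\frac{\Omega_4}{3}\int_M s\,{\rm dvol}_M$ by (3.1), since in the interior the symbol $\sigma_{-4}(D^{-2})$ agrees with the one used in the closed-manifold Kastler--Kalau--Walze theorem \cite{Ka,KW,Ac,Wa3}. Hence the task reduces to proving $\int_{\partial M}\Phi=0$ for the conformal Robertson--Walker metric $g^M=\varphi(x_n)^{-1}g^{\partial M}+\psi(x_n)\,dx_n^2$.

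To establish this, I would fix $x_0\in\partial M$, work in the product boundary coordinates and orthonormal frame introduced after (3.8), and enumerate all tuples $(r,l,j,k,|\alpha|)$ contributing to (2.4). The constraints $r-k-|\alpha|+l-j-1=-4$, $r\le -1$, $l\le -1$, together with the fact that only $\sigma_{-1}(D^{-1})$ and $\sigma_{-2}(D^{-1})$ from Lemma 3.1 can appear, cut the sum down to five cases: $(r,l,j,k,|\alpha|)=(-1,-1,0,0,1)$, $(-1,-1,1,0,0)$, $(-1,-1,0,1,0)$, $(-2,-1,0,0,0)$, $(-1,-2,0,0,0)$. For each, I would compute the required $x$- and $\xi$-derivatives at $x_0$ using Lemmas 3.1--3.4, apply $\pi^+_{\xi_n}$ by Cauchy contour integration along $\Gamma^+$ (with double or triple poles at $\xi_n=i$), take the Clifford trace using the identities collected in (3.23), and finally integrate in $\xi_n$ by residues and over $|\xi'|=1$ to produce the factor $\Omega_3$.

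The main routine obstacle lies in the two mixed cases with $r=-2$ or $l=-2$, where the subleading symbol of $D^{-1}$ decomposes by Lemma 3.1 into three pieces: a connection term involving $\sigma_0(D)(x_0)=-\tfrac{3}{4}\varphi'(0)c(dx_n)$, a term from $\partial_{x_n}[c(\xi)]$, and a term from $\partial_{x_n}(|\xi|^2)$. After $\pi^+_{\xi_n}$ these produce rational functions in $\xi_n$ with poles of order two and three at $\pm i$, and products such as $c(\xi')c(dx_n)c(\xi')$ at $|\xi'|=1$ must be reduced carefully. I would organize the trace computation into three summands $A,B,C$ in order to separate the $\varphi'(0)$ and $\psi'(0)$ dependence cleanly and minimize arithmetic error.

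The qualitative cancellation pattern that closes the argument is the following. The first case vanishes pointwise, because $\partial_{x_i}\sigma_{-1}(D^{-1})(x_0)=0$ for $i<n$ is a direct consequence of (3.7) combined with Lemma 3.2. The two remaining $r=l=-1$ cases contribute equal and opposite terms proportional to $(3\varphi'(0)+\psi'(0))\pi\Omega_3$, essentially because they swap the roles of $\partial_{\xi_n}$ and $\partial_{x_n}$ on $\pi^+_{\xi_n}\sigma_{-1}(D^{-1})$. Finally, the two mixed $(-2,-1)$ and $(-1,-2)$ pairings cancel as $\pm$ a term proportional to $(9\varphi'(0)-\psi'(0))\pi\Omega_3$. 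Summing the five cases gives $\Phi\equiv 0$ on $\partial M$, and combining with (3.1) yields the stated formula.
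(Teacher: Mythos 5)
Your proposal is correct and follows essentially the same route as the paper: the same split into interior term (3.1) plus boundary term, the same enumeration of the five cases $(r,l,j,k,|\alpha|)$, the same $A,B,C$ decomposition of $\pi^+_{\xi_n}\sigma_{-2}(D^{-1})$, and the same cancellation pattern (case a) I) vanishing pointwise, a) II) against a) III) as $\mp\frac{1}{8}(3\varphi'(0)+\psi'(0))\pi\Omega_3$, and b) against c) as $\pm\frac{1}{8}(9\varphi'(0)-\psi'(0))\pi\Omega_3$), giving $\Phi=0$.
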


Now, we recall the Einstein-Hilbert action for manifolds with boundary in \cite{Wa3},
 \begin{equation}
I_{\rm Gr}=\frac{1}{16\pi}\int_Ms{\rm dvol}_M+2\int_{\partial M}K{\rm dvol}_{\partial_M}:=I_{\rm {Gr,i}}+I_{\rm {Gr,b}},
\end{equation}
 where
 \begin{equation}
K=\sum_{1\leq i,j\leq {n-1}}K_{i,j}g_{\partial M}^{i,j};~~K_{i,j}=-\Gamma^n_{i,j},
\end{equation}
 and $K_{i,j}$ is the second fundamental form, or extrinsic
curvature. Take the metric in Section 2, then by Lemma A.2 in \cite{Wa3},
$K_{i,j}(x_0)=-\Gamma^n_{i,j}(x_0)=-\frac{1}{2}\varphi'(0),$ when $i=j<n$,
otherwise is zero. For $n=4$, we obtain
\begin{equation}
K(x_0)=\sum_{i,j}K_{i.j}(x_0)g_{\partial M}^{i,j}(x_0)=\sum_{i=1}^3K_{i,i}(x_0)=-\frac{3}{2}\varphi'(0).
\end{equation}
 So
 \begin{equation}
I_{\rm {Gr,b}}=-3\varphi'(0){\rm Vol}_{\partial M}.
\end{equation}
 Let $M$ be a $4$-dimensional manifold with boundary and $P,P'$ be two pseudodifferential operators with transmission
property (see \cite{Wa1}) on $\widehat M$. From (4.4) in \cite{Wa3},  we have
\begin{equation}
\pi^+P\circ\pi^+P'=\pi^+(PP')+L(P,P')
\end{equation}
and $L(P,P')$ is leftover term which represents the difference between
the composition $\pi^+P\circ\pi^+P'$ in Boutet de Monvel algebra and
the composition $PP'$ in the classical pseudodifferential operators
algebra. By (2.4), we define locally
\begin{eqnarray}
&&{\rm res}_{1,1}(P,P'):=-\frac{1}{2}\int_{|\xi'|=1}\int^{+\infty}_{-\infty}
{\rm trace} [\partial_{x_n}\pi^+_{\xi_n}\sigma_{-1}(P)\times
\partial_{\xi_n}^2\sigma_{-1}(P')]d\xi_n\sigma(\xi')dx'; \\
&&{\rm res}_{2,1}(P,P'):=-i\int_{|\xi'|=1}\int^{+\infty}_{-\infty}
{\rm trace} [\pi^+_{\xi_n}\sigma_{-2}(P)\times
\partial_{\xi_n}\sigma_{-1}(P')]d\xi_n\sigma(\xi')dx'.
\end{eqnarray}

Hence, they represent the difference between the composition
$\pi^+P\circ\pi^+P'$ in Boutet de Monvel algebra and the composition
$PP'$ in the classical pseudodifferential operators algebra
partially. Then
\begin{equation}
{\rm case~ a)~ II)}={\rm res}_{1,1}(D^{-1},D^{-1});~{\rm case~ b)}={\rm res}_{2,1}(D^{-1},D^{-1}).
\end{equation}

 Now, we assume $\partial M$ is flat , then
$\{dx_i=e_i\},~g^{\partial M}_{i,j}=\delta_{i,j},~\partial
_{x_s}g^{\partial M}_{i,j}=0$. So ${\rm res}_{1,1}(D^{-1},D^{-1})$
and ${\rm res}_{2,1}(D^{-1},D^{-1})$ are two global forms locally
defined by the aboved oriented orthonormal basis $\{dx_i\}$.  Let $\psi'(0)=\varphi'(0)$,
 from case a) II) and case b),  then we obtain:

\begin{thm}
Let M be a 4-dimensional flat compact connected foliation with the boundary $\partial M$ and the metric $g^{M}$ as above ,
and $D$ be the Dirac operator on $\widehat{M}$, then
\begin{eqnarray}
&& \int_{\partial M}{\rm res}_{1,1}(D^{-1},D^{-1})=\frac{\pi}{6}\Omega_3I_{\rm {Gr,b}};  \\
&&\int_{\partial M}{\rm res}_{2,1}(D^{-1},D^{-1})=-\frac{\pi}{3}\Omega_3I_{\rm {Gr,b}}.
\end{eqnarray}
\end{thm}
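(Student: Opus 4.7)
The plan is to leverage the heavy computation already done in cases a)II) and b) of Section 3 and simply specialise under the hypotheses of the theorem. The identifications
\[
\mathrm{case~a)~II)} = \mathrm{res}_{1,1}(D^{-1},D^{-1}),\qquad \mathrm{case~b)} = \mathrm{res}_{2,1}(D^{-1},D^{-1})
\]
are already recorded in (3.55), so the local expressions (3.25) and (3.41) give the integrand of each $\mathrm{res}_{j,1}(D^{-1},D^{-1})$ on $\partial M$ in terms of $\varphi'(0)$ and $\psi'(0)$ (pulled out of the fibre integrals).

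First I would justify globality: under the flatness assumption $g_{ij}^{\partial M} = \delta_{ij}$ with $\partial_{x_s}g_{ij}^{\partial M}=0$, the oriented orthonormal frame $\{dx_i\}$ produced in the computation of $\Phi(x_0)$ extends to a genuine global frame on $\partial M$, so the expressions obtained from (3.25) and (3.41) are not merely valid at the base point $x_0$ but define honest $(n-1)$-forms on $\partial M$. This is the point of the paragraph immediately preceding the theorem.

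Next I would substitute $\psi'(0)=\varphi'(0)$ into (3.25) and (3.41), giving
\[
\mathrm{res}_{1,1}(D^{-1},D^{-1}) = -\tfrac{1}{8}\bigl(3\varphi'(0)+\varphi'(0)\bigr)\pi\Omega_3\,dx' = -\tfrac{\varphi'(0)}{2}\pi\Omega_3\,dx',
\]
\[
\mathrm{res}_{2,1}(D^{-1},D^{-1}) = \tfrac{1}{8}\bigl(9\varphi'(0)-\varphi'(0)\bigr)\pi\Omega_3\,dx' = \varphi'(0)\pi\Omega_3\,dx'.
\]
Integrating over $\partial M$ converts $dx'$ into $\mathrm{Vol}_{\partial M}$. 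Finally I would invoke (3.51), which in the present $4$-dimensional setting reads $I_{\mathrm{Gr,b}}=-3\varphi'(0)\mathrm{Vol}_{\partial M}$, hence $\varphi'(0)\mathrm{Vol}_{\partial M}=-\tfrac{1}{3}I_{\mathrm{Gr,b}}$. Plugging this in yields both formulae
\[
\int_{\partial M}\mathrm{res}_{1,1}(D^{-1},D^{-1}) = \tfrac{\pi}{6}\Omega_3 I_{\mathrm{Gr,b}},\qquad \int_{\partial M}\mathrm{res}_{2,1}(D^{-1},D^{-1}) = -\tfrac{\pi}{3}\Omega_3 I_{\mathrm{Gr,b}}.
\]

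There is essentially no technical obstacle once cases a)II) and b) are in hand; the only point requiring a little care is the assertion that the leftover forms $\mathrm{res}_{1,1}$ and $\mathrm{res}_{2,1}$ are globally defined on $\partial M$ under the flatness hypothesis, rather than just pointwise expressions at $x_0$. Everything else is symbolic substitution and the bookkeeping identity $\varphi'(0)\mathrm{Vol}_{\partial M} = -\tfrac13 I_{\mathrm{Gr,b}}$.
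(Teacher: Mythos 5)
Your proposal is correct and follows exactly the route the paper takes: identify ${\rm res}_{1,1}$ and ${\rm res}_{2,1}$ with case a)~II) and case b) via (3.55), note globality of the leftover forms under the flatness hypothesis, set $\psi'(0)=\varphi'(0)$ in (3.25) and (3.41), and conclude with $I_{\rm {Gr,b}}=-3\varphi'(0){\rm Vol}_{\partial M}$ from (3.51). The arithmetic checks out and nothing is missing.
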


Nextly, for $3$-dimensional spin manifolds with boundary, we compute ${\rm Vol}^{(1,1)}_3$. By Section 5 in \cite{Wa3}, we have
 \begin{equation}
\widetilde{{\rm Wres}}[\pi^+D^{-1}\circ\pi^+D^{-1}]=\int_{\partial M}\Phi.
\end{equation}
 By (2.4), when $n=3$, we have  $ r-k-|\alpha|+l-j-1=-3,~~r\leq -1,l\leq-3$, so we get $r=-1,~ l=-1,~k=|\alpha|=j=0,$ then
  \begin{equation}
\Phi=\int_{|\xi'|=1}\int^{+\infty}_{-\infty} {\rm trace}_{S(TM)}
[ \sigma^+_{-1}(D^{-1})(x',0,\xi',\xi_n)\times \partial_{\xi_n}\sigma_{-1}
(D^{-3})(x',0,\xi',\xi_n)]d\xi_n\sigma(\xi')dx'.
\end{equation}
By (2.2.44) in \cite{Wa3}, we have
  \begin{equation}
\pi^+_{\xi_n}\sigma_{-1} (D^{-1})(x_0)|_{|\xi'|=1}=\frac{c(\xi')+ic(dx_n)}{2(\xi_n-i)}.
\end{equation}
From (3.5) we have
\begin{equation}
\partial_{\xi_n}\sigma_{-1}(D^{-1})=\frac{-2i\xi_n}{(1+\xi_n^{2})^2}c(\xi')+\frac{i-i\xi_n^{2}}{(1+\xi_n^{2})^2}c(dx_n).
\end{equation}

Since $n=3$, ${\rm tr}(id)={\rm dim}(S(TM))=2$.
By the relation of the Clifford action and ${\rm tr}{AB}={\rm tr }{BA}$,  we have the equalities:
 \begin{equation}
{\rm tr}[c(\xi')c(dx_n)]=0;~~{\rm tr}[c(dx_n)^2]=-2;~~{\rm tr}[c(\xi')^2](x_0)|_{|\xi'|=1}=-2.
\end{equation}
Hence from (3.59), (3.60) and (3.61), we have
 \begin{equation}
{\rm trace} [\sigma^+_{-1}(D^{-1})\times\partial_{\xi_n}\sigma_{-1}(D^{-1})](x_0)|_{|\xi'|=1}=\frac{-1}{(\xi_n+i)^2(\xi_n-i)}.
\end{equation}
Then
 \begin{equation}
\Phi=\frac{i\pi}{2}\Omega_2{\rm vol}_{\partial M}.
\end{equation}
where ${\rm vol}_{\partial M}$ denotes the canonical volume form of ${\partial M}$. Then
\begin{thm}
Let  $M$ be a $3$-dimensional
compact spin manifold with the boundary $\partial M$ and the metric
$g^M$ as in Section 2 and $D$ be the Dirac operator on $\widehat{M}$, then
 \begin{equation}
 {\rm Vol}^{(1,1)}_3=\frac{i\pi}{2}\Omega_2{\rm vol}_{\partial M}.
\end{equation}
where ${\rm Vol}_{\partial M}$ denotes the canonical volume of ${\partial M}$.
\end{thm}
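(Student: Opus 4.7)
The plan is to reduce ${\rm Vol}^{(1,1)}_3$ to a single scalar contour integral, exploiting the rigidity of the index constraint in dimension three. First I would apply the decomposition (2.3) with $p_1=p_2=1$ and $n=3$. By the observation of Section 5 in \cite{Wa3}, the interior piece $\int_M \int_{|\xi|=1} {\rm tr}[\sigma_{-3}(D^{-2})]\sigma(\xi)\,dx$ vanishes, leaving only $\int_{\partial M}\Phi$. Next I would pin down which terms of (2.4) survive: the constraint $r - k - |\alpha| + l - j - 1 = -3$ with $r,l \le -1$ and $k,j,|\alpha| \ge 0$ admits the \emph{unique} solution $r=l=-1$, $k=j=|\alpha|=0$. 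In particular, no $x$-derivatives of symbols appear, so neither $\varphi'(0)$ nor $\psi'(0)$ can enter the final answer --- this is the structural reason the three-dimensional result looks much cleaner than the four-dimensional one and explains why Section 2's conformal Robertson--Walker deformation leaves no trace on ${\rm Vol}^{(1,1)}_3$.

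Second, I would evaluate the two surviving symbols at $x_0$ in the normal coordinates of Section 3. From Lemma 3.1 one has $\sigma_{-1}(D^{-1}) = ic(\xi)/|\xi|^2$, which gives $\partial_{\xi_n}\sigma_{-1}(D^{-1})|_{|\xi'|=1}$ as in (3.60); the Cauchy integral formula (cf.\ (2.2.44) in \cite{Wa3}) gives $\pi^+_{\xi_n}\sigma_{-1}(D^{-1})(x_0)|_{|\xi'|=1} = (c(\xi')+ic(dx_n))/(2(\xi_n-i))$. Multiplying and taking the spinor trace in dimension three --- where $\dim S(TM) = 2$ and the Clifford relations (3.62) give ${\rm tr}[c(\xi')^2]|_{|\xi'|=1} = {\rm tr}[c(dx_n)^2] = -2$ with ${\rm tr}[c(\xi')c(dx_n)] = 0$ --- and simplifying via $\xi_n^2+1 = (\xi_n-i)(\xi_n+i)$ collapses the integrand to the scalar rational function $-1/((\xi_n-i)(\xi_n+i)^2)$, as recorded in (3.63).

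Finally, I would close the $\xi_n$-contour in the upper half-plane and pick up the simple pole at $\xi_n = i$ with residue $1/4$, yielding $\int_{-\infty}^{+\infty}d\xi_n = i\pi/2$. Integrating over the unit cosphere $|\xi'|=1$ in $T^*(\partial M)$ pulls out $\Omega_2$, and integration over $\partial M$ produces ${\rm vol}_{\partial M}$, giving the claimed value $\frac{i\pi}{2}\Omega_2\,{\rm vol}_{\partial M}$. I do not anticipate any substantive obstacle beyond careful bookkeeping of signs and the normalization convention for $\pi^+_{\xi_n}$ (which I would cross-check against the four-dimensional case a)~II) already computed in Section 3); once the index is pinned down, the rest is a single residue calculation.
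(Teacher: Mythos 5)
Your proposal is correct and follows essentially the same route as the paper: reduce to the boundary term, observe that the index constraint forces the single case $r=l=-1$, $k=j=|\alpha|=0$, compute $\pi^+_{\xi_n}\sigma_{-1}(D^{-1})$ and $\partial_{\xi_n}\sigma_{-1}(D^{-1})$, take the two-dimensional spinor trace to get $-1/((\xi_n-i)(\xi_n+i)^2)$, and evaluate the residue at $\xi_n=i$. Your added structural remark that no $x$-derivatives (hence no $\varphi'(0)$ or $\psi'(0)$) can appear is consistent with, and a nice gloss on, the paper's computation.
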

\begin{rem}
When $\psi(x_n)=1$, we get Theorem 2.5 and Theorem 5.1 in \cite{Wa3}.
\end{rem}

\section{A Kastler-Kalau-Walze type theorem for $6$-dimensional spin manifolds with boundary of  conformal warped product metric associated with $D^{2}$}
 In this section, We compute the lower dimensional volume ${\rm Vol}^{(2,2)}_6$ for $4$-dimensional spin manifolds with
boundary of  warped product metric $ g^M=\frac{1}{\varphi(x_n)}g^{\partial M}+\psi(x_n)dx_n^2$
 and get a Kastler-Kalau-Walze type theorem in this case.

Since $[\sigma_{-6}(D^{-4})]|_M$ has the same expression as $\sigma_{-6}(D^{-4})$ in the case of
manifolds without boundary in  \cite{Wa4}, we have
\begin{equation}
\int_M\int_{|\xi|=1}{\rm tr}[\sigma_{-6}(D^{-4})]\sigma(\xi)dx=-\frac{5\Omega_6}{3}\int_Ms{\rm dvol}_M,
\end{equation}
where $\Omega_n=\frac{2\pi^{\frac{n}{2}}}{\Gamma(\frac{n}{2})}$. So we only need to compute $\int_{\partial M}\Phi$.

By Lemma 1 in \cite{Wa4} , we have

\begin{lem}
\begin{eqnarray}
&&\sigma_{-2}(D^{-2})=|\xi|^{-2}; \\
&&\sigma_{-3}(D^{-2})=-\sqrt{-1}|\xi|^{-4}\xi_k(\Gamma^k-2\delta^k)-\sqrt{-1}|\xi|^{-6}2\xi^j\xi_\alpha\xi_\beta
\partial_jg^{\alpha\beta}.
\end{eqnarray}
\end{lem}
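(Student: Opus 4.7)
The plan is to obtain $\sigma_{-2}(D^{-2})$ and $\sigma_{-3}(D^{-2})$ by inverting the total symbol of $D^{2}$ via the standard Leibniz composition formula for pseudodifferential symbols. Writing $\sigma(D^{-2}) = q_{-2} + q_{-3} + q_{-4} + \cdots$, the identity $D^{2}\circ D^{-2} = \mathrm{Id}$ translates at the symbol level to
\begin{equation*}
\sum_{\alpha}\frac{(-\sqrt{-1})^{|\alpha|}}{\alpha!}\partial_{\xi}^{\alpha}\sigma(D^{2})\,\partial_{x}^{\alpha}\sigma(D^{-2}) = 1,
\end{equation*}
which I would then expand degree by degree.

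First, I would assemble $\sigma(D^{2})$. Using the Lichnerowicz/Weitzenb\"ock identity $D^{2} = -g^{ij}(\nabla_{i}^{S}\nabla_{j}^{S}-\Gamma_{ij}^{k}\nabla_{k}^{S}) + \tfrac{s}{4}$, where $\nabla^{S}$ is the spin connection, a direct computation of its principal and subprincipal parts gives $\sigma_{2}(D^{2}) = |\xi|^{2} = g^{ij}\xi_{i}\xi_{j}$ and $\sigma_{1}(D^{2}) = \sqrt{-1}\,\xi_{k}(\Gamma^{k}-2\delta^{k})$, where $\Gamma^{k} = g^{ij}\Gamma_{ij}^{k}$ and $\delta^{k}$ denotes the contribution from the spin connection 1-form in the coordinate frame (this is precisely Lemma 1 of \cite{Wa4}, which I would cite rather than rederive).

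Next, I would solve the recursion. At homogeneity $0$, the equation reduces to $\sigma_{2}(D^{2})\cdot q_{-2} = 1$, yielding $q_{-2} = |\xi|^{-2}$ immediately. At homogeneity $-1$, I match
\begin{equation*}
\sigma_{2}(D^{2})\,q_{-3} + \sigma_{1}(D^{2})\,q_{-2} + (-\sqrt{-1})\sum_{k}\partial_{\xi_{k}}\sigma_{2}(D^{2})\,\partial_{x_{k}}q_{-2} = 0,
\end{equation*}
and solve for $q_{-3}$. Using $|\xi|^{-2}\sigma_{1}(D^{2})q_{-2} = \sqrt{-1}\,|\xi|^{-4}\xi_{k}(\Gamma^{k}-2\delta^{k})$ together with $\partial_{\xi_{k}}|\xi|^{2} = 2\xi^{k}$ and $\partial_{x_{k}}|\xi|^{-2} = -|\xi|^{-4}\xi_{\alpha}\xi_{\beta}\partial_{k}g^{\alpha\beta}$, the last term collapses to $-\sqrt{-1}\,|\xi|^{-6}\cdot 2\xi^{j}\xi_{\alpha}\xi_{\beta}\partial_{j}g^{\alpha\beta}$. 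Adding the two pieces reproduces the stated formula for $\sigma_{-3}(D^{-2})$.

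The main obstacle is not the recursive inversion, which is a template calculation, but rather the correct identification of $\sigma_{1}(D^{2})$; in particular, the appearance of the spin connection term $-2\delta^{k}$ requires tracking the first-order contribution coming from $c(\xi)\,\sigma_{0}(D) + \sigma_{0}(D)\,c(\xi)$ after squaring the Dirac operator, and the sign and normalization conventions must be consistent with those of \cite{Wa4}. Since the statement is attributed to that reference, I would verify these conventions and then quote the formulas rather than re-expand the Weitzenb\"ock term from scratch.
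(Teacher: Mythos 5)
Your derivation is correct, and it supplies more than the paper does: in the text this lemma is justified solely by the citation ``By Lemma 1 in \cite{Wa4}'', with no computation shown. Your parametrix recursion is exactly the standard argument behind that citation: from $\sigma_2(D^2)=|\xi|^2$ one gets $q_{-2}=|\xi|^{-2}$, and the homogeneity $-1$ equation $\sigma_2 q_{-3}+\sigma_1 q_{-2}+(-\sqrt{-1})\sum_k\partial_{\xi_k}\sigma_2\,\partial_{x_k}q_{-2}=0$ with $\sigma_1(D^2)=\sqrt{-1}\,\xi_k(\Gamma^k-2\delta^k)$ (read off from the expansion $D^2=-\sum g^{ij}[\partial_i\partial_j+2\sigma_i\partial_j-\Gamma^k_{ij}\partial_k+\cdots]+\tfrac{s}{4}$, which the paper records as its equation for $D^2$ in Section 5) yields precisely the stated $\sigma_{-3}(D^{-2})$. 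The only bookkeeping slip is in your final sentence of the computation: since $q_{-3}=-|\xi|^{-2}\bigl[\sigma_1 q_{-2}+(-\sqrt{-1})\partial_{\xi_k}\sigma_2\,\partial_{x_k}q_{-2}\bigr]$, the piece $|\xi|^{-2}\sigma_1 q_{-2}=\sqrt{-1}\,|\xi|^{-4}\xi_k(\Gamma^k-2\delta^k)$ must still be negated before ``adding the two pieces''; with that sign carried through, both terms match the lemma. Your identification of the $-2\delta^k$ term as the spin-connection contribution is consistent with how $\delta^k$ is used later in the paper (Section 4, where $\delta^k=\tfrac14\varphi'(0)c(\widetilde{e_k})c(\widetilde{e_n})$ at $x_0$), so the conventions line up with \cite{Wa4} as you intended.
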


 Now we can compute $\Phi$ (see formula (2.4) for the definition of $\Phi$), since the sum is taken over $
-r-l+k+j+|\alpha|=5,~~r,~l\leq -2,$ then we have the following five cases:

{\bf case a)~I)}~$r=-2,~l=-2,~k=j=0,~|\alpha|=1$

From (2.4) we have
 \begin{equation}
{\rm case~a)~I)}=-\int_{|\xi'|=1}\int^{+\infty}_{-\infty}\sum_{|\alpha|=1}
 {\rm trace}[\partial^\alpha_{\xi'}\pi^+_{\xi_n}\sigma_{-2}(D^{-2})\times
 \partial^\alpha_{x'}\partial_{\xi_n}\sigma_{-2}(D^{-2})](x_0)d\xi_n\sigma(\xi')dx'.
\end{equation}
By Lemma 3.2, for $i<n$, then
 \begin{equation}
\partial_{x_i}\sigma_{-2}(D^{-2})(x_0)=\partial_{x_i}\left(\frac{1}{|\xi|^2}\right)(x_0)
=-\frac{\partial_{x_i}(|\xi|^2)(x_0)}{|\xi|^4}\nonumber\\
=0.
\end{equation}
Then case a) I) vanishes.

 {\bf case a)~II)}~$r=-1,~l=-1, ~k=|\alpha|=0,~j=1$

From (2.4) we have
 \begin{equation}
{\rm case~ a)~II)}=-\frac{1}{2}\int_{|\xi'|=1}\int^{+\infty}_{-\infty} {\rm
trace} [\partial_{x_n}\pi^+_{\xi_n}\sigma_{-2}(D^{-2})\times
\partial_{\xi_n}^2\sigma_{-2}(D^{-2})](x_0)d\xi_n\sigma(\xi')dx'.
\end{equation}
By Lemma 3.1 and Lemma 3.2, we have
 \begin{equation}
\partial^2_{\xi_n}\sigma_{-2}(D^{-2})(x_0)=\partial^2_{\xi_n}(\frac{1}{|\xi|^2})(x_0)=\frac{-2+6\xi_n^{2}}{(1+\xi_n^{2})^3}.
\end{equation}
and
 \begin{equation}
\partial_{x_n}\sigma_{-2}(D^{-2})(x_0)=\frac{\xi_n^{2}\psi'(0)-\varphi'(0)}{(1+\xi_n^{2})^3}.
\end{equation}
 Similarly to (3.18), we have
 \begin{equation}
\pi^+_{\xi_n}\left[\partial_{x_n}\sigma_{-2}(D^{-2})\right](x_0)|_{|\xi'|=1}=\frac{(i\xi_n^{2}+2)\varphi'(0)-i\xi_n\psi'(0)}{4(\xi_n-i)^{2}}.
\end{equation}
Combining (4.6) and (4.8), we obtain

\begin{eqnarray}
 &&\int^{+\infty}_{-\infty}
\frac{(i\xi_n^{2}+2)\varphi'(0)-i\xi_n\psi'(0)}{4(\xi_n-i)^{2}}\times
 \frac{-2+6\xi_n^{2}}{(1+\xi_n^{2})^3} d\xi_n\nonumber\\
&=&-\frac{1}{2}\int_{\Gamma^+} \frac{(3\xi_n^{2}-1)\big(-2\varphi'(0)-i\xi_n\varphi'(0)+i\xi_n\psi'(0)\big)}
{(\xi_n-i)^5(\xi_n+i)^3}d\xi_ndx'\nonumber\\
&=&-\frac{1}{2}\pi i\Big[\frac{(3\xi_n^{2}-1)\big(-2\varphi'(0)-i\xi_n\varphi'(0)+i\xi_n\psi'(0)\big)}
{(\xi_n+i)^3}\Big]^{(4)}|_{\xi_n=i}dx'\nonumber\\
&=&\frac{1}{32}\big(5\varphi'(0)+\psi'(0)\big),
\end{eqnarray}
Since $n=6$, ${\rm tr}_{S(TM)}[{\rm id}]={\rm dim}(\wedge^*(3))=8.$
So by (4.5),(4.9), we get
 \begin{equation}
{\rm case~ a)~II)}=-\frac{1}{8}\big(5\varphi'(0)+\psi'(0)\big)\pi \Omega_4dx'.
\end{equation}
 where $\Omega_4$ is the canonical volume of $S^4$.

{\bf case a)~III)}~$r=-2,~l=-2,~j=|\alpha|=0,~k=1$\\

From (2.4)  and an integration by parts, we get
\begin{eqnarray}
{\rm case~ a)~III)}&=&-\frac{1}{2}\int_{|\xi'|=1}\int^{+\infty}_{-\infty}
{\rm trace} [\partial_{\xi_n}\pi^+_{\xi_n}\sigma_{-2}(D^{-2})\times
\partial_{\xi_n}\partial_{x_n}\sigma_{-2}(D^{-2})](x_0)d\xi_n\sigma(\xi')dx'\nonumber\\
&=&\frac{1}{2}\int_{|\xi'|=1}\int^{+\infty}_{-\infty} {\rm trace}
[\partial_{\xi_n}^2\pi^+_{\xi_n}\sigma_{-2}(D^{-2})\times
\partial_{x_n}\sigma_{-2}(D^{-2})](x_0)d\xi_n\sigma(\xi')dx'.
\end{eqnarray}

By Lemma 3.1 and Lemma 3.2, we have
 \begin{equation}
\partial_{\xi_n}^2\pi^+_{\xi_n}\sigma_{-2}(D^{-2})(x_0)|_{|\xi'|=1}=\frac{-i}{(\xi_n-i)^3}.
\end{equation}
Substituting (4.7) and (4.12) into (4.11), one sees that
\begin{eqnarray}
{\rm {\bf case~a)~III)}}&=&
\frac{1}{2}\int_{|\xi'|=1}\int^{+\infty}_{-\infty}\frac{8i\varphi'(0)-8i\xi_n^{2}\psi'(0)}{(\xi_n-i)^5(\xi_n+i)^2}d\xi_n\sigma(\xi')dx' \nonumber\\
&=&\frac{1}{8}\big(5\varphi'(0)+\psi'(0)\big)\pi \Omega_4dx'.
\end{eqnarray}

{\bf case b)}~$r=-2,~l=-3,~k=j=|\alpha|=0$\\

From (2.4)  and an integration by parts, we get
\begin{eqnarray}
{\rm case~ b)}&=&-i\int_{|\xi'|=1}\int^{+\infty}_{-\infty}
{\rm trace} [\pi^+_{\xi_n}\sigma_{-2}(D^{-2})\times
\partial_{\xi_n}\sigma_{-3}(D^{-2})](x_0)d\xi_n\sigma(\xi')dx'\nonumber\\
&=&=i\int_{|\xi'|=1}\int^{+\infty}_{-\infty}
{\rm trace} [\partial_{\xi_n}\pi^+_{\xi_n}\sigma_{-2}(D^{-2})\times
\sigma_{-3}(D^{-2})](x_0)d\xi_n\sigma(\xi')dx'.
\end{eqnarray}
By Lemma 3.2, we have
 \begin{equation}
\partial_{\xi_n}\pi_{\xi_n}^+\sigma_{-2}(D^{-2})(x_0)|_{|\xi'|=1}=\frac{i}{2(\xi_n-i)^2}.
\end{equation}

In the normal coordinate, $g^{ij}(x_0)=\delta_i^j$ and $\partial_{x_j}(g^{\alpha\beta})(x_0)=0,$ {\rm if
}$j<n;~=\varphi'(0) \delta^\alpha_\beta- \psi'(0)\delta^n_n,~{\rm if }~j=n.$ So by Lemma A.2 \cite{Wa3},
we have $\Gamma^n(x_0)=\frac{5}{2}\varphi'(0)+\frac{1}{2}\psi'(0)$ and
$\Gamma^k(x_0)=0$ for $k<n$. By the definition of $\delta^k$ and Lemma 2.3 in \cite{Wa3}, we have $\delta^n(x_0)=0$ and
$\delta^k=\frac{1}{4}\varphi'(0)c(\widetilde{e_k})c(\widetilde{e_n})$ for $k<n$. So
\begin{eqnarray}
&&\sigma_{-3}(D^{-2})(x_0)|_{|\xi'|=1}\nonumber\\
&=&-\sqrt{-1}|\xi|^{-4}\xi_k(\Gamma^k-2\delta^k)(x_0)|_{|\xi'|=1}-\sqrt{-1}|\xi|^{-6}2\xi^j\xi_\alpha\xi_\beta
\partial_jg^{\alpha\beta}(x_0)|_{|\xi'|=1}\nonumber\\
&=&=\frac{-i}{(1+\xi_n^2)^2}\Big(-\frac{1}{2}\varphi'(0)\sum_{k<n}\xi_k
c(\widetilde{e_k})c(\widetilde{e_n})+\xi_n(\frac{5}{2}\varphi'(0)+\frac{1}{2}\psi'(0))\Big)
-\frac{2i\xi_n(\varphi'(0)a- \psi'(0))}{(1+\xi_n^2)^3}.
\end{eqnarray}
We note that $\int_{|\xi'|=1}\xi_1\cdots\xi_{2q+1}\sigma(\xi')=0$,
so the first term in (4.16) has no contribution for computing case
b). Then
\begin{eqnarray}
{\bf case~ b)}&=&
  \int_{|\xi'|=1}\int^{+\infty}_{-\infty}\frac{2i\xi_n\Big(9\varphi'(0)+5\xi_n^{2}\varphi'(0)-3\psi'(0)+\psi'(0)\xi_n^{2}\Big)}
  {(\xi_n-i)^5(\xi_n+i)^3}
  d\xi_n\sigma(\xi')dx' \nonumber\\
 &=&  \Omega_4  \int_{\Gamma^+} \frac{2i\xi_n\Big(9\varphi'(0)+5\xi_n^{2}\varphi'(0)-3\psi'(0)+\psi'(0)\xi_n^{2}\Big)}
  {(\xi_n-i)^5(\xi_n+i)^3}d\xi_ndx'\nonumber\\
&=&-\frac{3}{8}\big(5\varphi'(0)-\psi'(0)\big)\pi \Omega_4dx'.
\end{eqnarray}

 {\bf  case c)}~$r=-3,~l=-2,~k=j=|\alpha|=0$\\

From (2.4) we have
 \begin{equation}
{\rm {\bf case~ c)}}=-i\int_{|\xi'|=1}\int^{+\infty}_{-\infty}{\rm trace} [\pi^+_{\xi_n}\sigma_{-3}(D^{-2})\times
\partial_{\xi_n}\sigma_{-2}(D^{-2})](x_0)d\xi_n\sigma(\xi')dx'.
\end{equation}
By (24) in \cite{Wa4}, we have
 \begin{equation}
{\rm {\bf case~ c)}}={\rm {\bf case~ b)}}-i\int_{|\xi'|=1}\int^{+\infty}_{-\infty}{\rm
tr}[\partial_{\xi_n}\sigma_{-2}(D^{-2})\times
\sigma_{-3}(D^{-2})]d\xi_n\sigma(\xi')dx'.
\end{equation}
From (4.2) we have
 \begin{equation}
\partial_{\xi_n}\sigma_{-2}(D^{-2})(x_0)=\frac{-2\xi_n}{(1+\xi_n^{2})^2}.
\end{equation}
Combining (4.16) and (4.20), we obtain
  \begin{equation}
-i\int_{|\xi'|=1}\int^{+\infty}_{-\infty}{\rm tr}[\partial_{\xi_n}\sigma_{-2}(D^{-2})\times
\sigma_{-3}(D^{-2})]d\xi_n\sigma(\xi')dx'=\frac{3}{4}\big(5\varphi'(0)-\psi'(0)\big)\pi \Omega_4dx'
\end{equation}
From (4.19) and (4.21), we obtain
  \begin{equation}
{\bf case~ c)}=\frac{3}{8}\big(5\varphi'(0)-\psi'(0)\big)\pi \Omega_4dx'.
\end{equation}
Since $\Phi$ is the sum of the cases a), b) and c), so $\Phi=0$. Therefore

\begin{thm}
Let  $M$ be a $6$-dimensional
compact spin manifold with the boundary $\partial M$ and the metric
$g^M$ as above and $D$ be the Dirac operator on $\widehat{M}$, then
\begin{equation}
{\rm Vol}^{(2,2)}_6=\widetilde{{\rm Wres}}[\pi^+D^{-2}\circ\pi^+D^{-2}]=-\frac{5\Omega_6}{3}\int_Ms{\rm dvol}_M.
\end{equation}
\end{thm}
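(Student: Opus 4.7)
The plan is to apply the decomposition (2.3) with $p_1=p_2=2$ and $n=6$, so that $\widetilde{{\rm Wres}}[\pi^+D^{-2}\circ\pi^+D^{-2}]$ splits into an interior piece and a boundary piece $\int_{\partial M}\Phi$. The interior piece is already handled by (4.1), which records the computation from the boundaryless case in \cite{Wa4} and yields exactly the claimed $-\frac{5\Omega_6}{3}\int_M s\,{\rm dvol}_M$. Thus everything reduces to showing $\int_{\partial M}\Phi=0$.

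To analyze $\Phi$, I would first enumerate the admissible indices in (2.4). The constraint $r-k-|\alpha|+l-j-1=-6$ together with $r,l\leq -2$ forces $r+l\in\{-4,-5\}$, which, as in Section~3, leaves exactly five cases: (a\,I) $r=l=-2$, $|\alpha|=1$; (a\,II) $r=l=-2$, $j=1$; (a\,III) $r=l=-2$, $k=1$; (b) $(r,l)=(-2,-3)$; (c) $(r,l)=(-3,-2)$. For each case I would use Lemma~4.1 to substitute $\sigma_{-2}(D^{-2})=|\xi|^{-2}$ and the explicit formula for $\sigma_{-3}(D^{-2})$, work in the normal coordinates at $x_0\in\partial M$ adapted to the conformal Robertson--Walker metric (as set up around (3.7)--(3.8)), and apply Lemma~3.2 for $\partial_{x_j}(|\xi|^2)(x_0)$.

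The heart of the computation is then routine contour-integration with the Cauchy formula for $\pi^+_{\xi_n}$, together with the Clifford-algebra trace identities (3.23), updated to ${\rm tr}[{\rm id}]=8$ in dimension~6. Specifically, (a\,I) vanishes because $\partial_{x_i}\sigma_{-2}(D^{-2})(x_0)=0$ for $i<n$, exactly parallel to (4.5). For (a\,II) and (a\,III), one computes $\partial_{\xi_n}^2\sigma_{-2}(D^{-2})$, $\partial_{x_n}\sigma_{-2}(D^{-2})$, and the appropriate $\pi^+_{\xi_n}$-projections, then reduces the $\xi_n$-integral to residues at $\xi_n=i$; these two contributions will differ only in sign, yielding $\mp\tfrac{1}{8}(5\varphi'(0)+\psi'(0))\pi\Omega_4\,dx'$ and therefore cancelling in pairs. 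For (b) and (c), one uses the integration-by-parts identity from (24) in \cite{Wa4} to relate (c) to (b) plus an extra trace integral, then exploits the vanishing $\int_{|\xi'|=1}\xi_k\,\sigma(\xi')=0$ for $k<n$ to discard the $c(\widetilde e_k)c(\widetilde e_n)$-piece of $\sigma_{-3}(D^{-2})$; the remaining contributions will be $\mp\tfrac{3}{8}(5\varphi'(0)-\psi'(0))\pi\Omega_4\,dx'$, which again cancel.

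The main obstacle I anticipate is simply the size and bookkeeping of the symbol expressions, particularly for (b) and (c), where $\sigma_{-3}(D^{-2})$ contributes both a Christoffel-type piece and a $\partial_j g^{\alpha\beta}$ piece that each need to be tracked with the correct factors of $\varphi'(0)$ and $\psi'(0)$ coming from (3.9). Once the five contour integrals are evaluated, the cancellations a\,II)+a\,III)$=0$ and b)+c)$=0$ are forced by the structure (each pair arises from an integration-by-parts symmetry), so the algebraic identity $\Phi=0$ follows, and combining with (4.1) yields the stated Kastler--Kalau--Walze formula.
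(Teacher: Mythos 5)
Your proposal matches the paper's own proof essentially step for step: the same splitting via (2.3), the same five boundary cases with the same reductions (case a\,I) vanishing by $\partial_{x_i}\sigma_{-2}(D^{-2})(x_0)=0$, integration by parts relating case c) to case b), and the odd-moment vanishing on $|\xi'|=1$), and your predicted values $\mp\tfrac{1}{8}(5\varphi'(0)+\psi'(0))\pi\Omega_4\,dx'$ and $\mp\tfrac{3}{8}(5\varphi'(0)-\psi'(0))\pi\Omega_4\,dx'$ agree exactly with (4.10), (4.13), (4.17), (4.22). The approach and the cancellation pattern are the same as in Section~4, so the proposal is correct.
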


 Now, we recall the Einstein-Hilbert action for manifolds with boundary. Let $\psi'(0)=\varphi'(0)$,
 from case a) II) and case b) in section 4,  then we obtain:
\begin{thm}
Let  $M$ be a $6$-dimensional compact spin manifold with the boundary $\partial M$ and the metric
$g^M$ as above and $D$ be the Dirac operator on $\widehat{M}$, then
\begin{eqnarray}
&& \int_{\partial M}{\rm res}_{2,2}(D^{-2},D^{-2})=\frac{3\pi}{20}\Omega_4I_{\rm {Gr,b}};  \\
&&\int_{\partial M}{\rm res}_{2,3}(D^{-2},D^{-2})=\frac{3\pi}{10}\Omega_4I_{\rm {Gr,b}}.
\end{eqnarray}
\end{thm}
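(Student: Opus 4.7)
The plan is to identify the two leftover residue terms ${\rm res}_{2,2}(D^{-2},D^{-2})$ and ${\rm res}_{2,3}(D^{-2},D^{-2})$ with boundary integrands already computed in Section~4. In analogy with the definitions (3.53)--(3.54) from the four-dimensional case, we set ${\rm res}_{p,q}(P,P')$ to be the local form on $\partial M$ whose integrand is the leftover piece built from $\sigma_{-p}(P)$ and $\sigma_{-q}(P')$. Concretely, ${\rm res}_{2,2}(P,P')$ has integrand $-\tfrac{1}{2}{\rm trace}[\partial_{x_n}\pi^+_{\xi_n}\sigma_{-2}(P)\times\partial_{\xi_n}^2\sigma_{-2}(P')]d\xi_n\sigma(\xi')$, and ${\rm res}_{2,3}(P,P')$ has integrand $-i\,{\rm trace}[\pi^+_{\xi_n}\sigma_{-2}(P)\times\partial_{\xi_n}\sigma_{-3}(P')]d\xi_n\sigma(\xi')$. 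By inspection these are exactly case~a)~II) and case~b) of the ${\rm Vol}^{(2,2)}_6$ computation.

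First, I would record the pointwise identifications at a boundary point $x_0$: combining (4.10) and (4.17) gives
\[
{\rm res}_{2,2}(D^{-2},D^{-2})(x_0)=-\tfrac{1}{8}\bigl(5\varphi'(0)+\psi'(0)\bigr)\pi\Omega_4\,dx',
\]
and
\[
{\rm res}_{2,3}(D^{-2},D^{-2})(x_0)=-\tfrac{3}{8}\bigl(5\varphi'(0)-\psi'(0)\bigr)\pi\Omega_4\,dx'.
\]
Imposing the flatness-type hypothesis $\psi'(0)=\varphi'(0)$ (parallel to the setting of Theorem~3.2), these reduce respectively to $-\tfrac{3}{4}\varphi'(0)\pi\Omega_4\,dx'$ and $-\tfrac{3}{2}\varphi'(0)\pi\Omega_4\,dx'$.

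Next, I would match these with the gravitational boundary action using Lemma~A.2 of \cite{Wa3} specialised to $n=6$. Exactly the same extrinsic-curvature computation as in (3.49)--(3.51), but now with $n-1=5$ tangential indices, gives $K_{i,i}(x_0)=-\tfrac{1}{2}\varphi'(0)$ for $i=1,\dots,5$, so that $K(x_0)=-\tfrac{5}{2}\varphi'(0)$ and $I_{\rm Gr,b}=-5\varphi'(0)\,{\rm Vol}_{\partial M}$. Integrating the two local residues over $\partial M$ and comparing with this expression then reproduces exactly the coefficients $\tfrac{3\pi}{20}\Omega_4$ and $\tfrac{3\pi}{10}\Omega_4$ asserted in the theorem.

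The main obstacle is essentially bookkeeping: one must verify that the leftover term $L(\pi^+P,\pi^+P')$ in Boutet de Monvel's algebra really does produce the integrands of (4.5) and of the integration-by-parts form of (4.14), rather than some other admissible recombination of the general expansion (2.4). Once that matching is in hand, the argument is pure substitution, together with the $n=6$ extrinsic curvature computation and elementary arithmetic against $I_{\rm Gr,b}$.
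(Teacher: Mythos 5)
Your proposal is correct and follows essentially the same route as the paper, which simply identifies ${\rm res}_{2,2}$ and ${\rm res}_{2,3}$ with case a) II) (4.10) and case b) (4.17), sets $\psi'(0)=\varphi'(0)$, and compares with $I_{\rm Gr,b}=-5\varphi'(0){\rm Vol}_{\partial M}$ computed from $K(x_0)=-\tfrac{5}{2}\varphi'(0)$ for $n=6$; all your numerical checks agree. Your closing worry about matching the leftover term $L(P,P')$ to these particular integrands is moot here, since the paper (by analogy with (3.53)--(3.54)) takes these local expressions as the \emph{definition} of ${\rm res}_{p,q}$, so nothing further needs to be verified.
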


\section{A Kastler-Kalau-Walze type theorem for $6$-dimensional spin manifolds with boundary of  conformal warped product metric associated with $D$
and $D^{3}$ }
 In this section, We compute the lower dimensional volume ${\rm Vol}^{(1,3)}_6$ for $6$-dimensional spin manifolds with
boundary and get a Kastler-Kalau-Walze type theorem in this case.

Firstly, we compute the symbol $\sigma(D^{-3})$ of $D^{-3}$. Recall the definition of the Dirac operator $D$  \cite{FGLS}\cite{Y}.
 Let $\nabla^L$ denote the Levi-Civita connection about $g^M$. In the local coordinates $\{x_i; 1\leq i\leq n\}$ and the
 fixed orthonormal frame $\{\widetilde{e_1},\cdots,\widetilde{e_n}\}$, the connection matrix $(\omega_{s,t})$ is defined by
\begin{equation}
\nabla^L(\widetilde{e_1},\cdots,\widetilde{e_n})= (\widetilde{e_1},\cdots,\widetilde{e_n})(\omega_{s,t}).
\end{equation}

 The Dirac operator is defined by
\begin{equation}
D=\sum^n_{i=1}c(\widetilde{e_i})\Big[\widetilde{e_i}
-\frac{1}{4}\sum_{s,t}\omega_{s,t}(\widetilde{e_i})c(\widetilde{e_s})c(\widetilde{e_t})\Big].
\end{equation}
where $c(\widetilde{e_i})$ denotes the Clifford action.

Recall the definition of the Dirac operator $D^{2}$ in \cite{Ka}, \cite{KW} and \cite{Wa4}, we have
\begin{equation}
D^{2}=-\sum_{i,j}g^{i,j}\Big[\partial_{i}\partial_{j}+2\sigma_{i}\partial_{j}+(\partial_{i}\sigma_{j})+\sigma_{i}\sigma_{j}
    -\Gamma_{i,j}^{k}\partial_{i}-\Gamma_{i,j}^{k}\sigma_{k}\Big]+\frac{1}{4}s.
\end{equation}
where $\sigma_{i}:=-\frac{1}{4}\sum_{s,t}\omega_{s,t}(\partial_{i})e_se_t$.

Combining (5.2) and (5.3), we have
\begin{eqnarray}
D^{3}
&=&\sum^n_{i=1}c(\widetilde{e_i}) \langle e_i, dx_{l}\rangle \bigg\{ -\sum_{i,j}g^{i,j}\partial_{i}\partial_{j}\partial_{l}
 -\sum_{i,j}g^{i,j}(4\sigma_{i}\partial_{j}-2\Gamma_{i,j}^{k}\partial_{k})\partial_{l}\nonumber\\
&&-\sum_{i,j}g^{i,j}\Big(2(\partial_{l}\sigma_{i})\partial_{j}+2(\partial_{i}\sigma_{j})\partial_{l}
 +3\sigma_{i}\sigma_{j}\partial_{l}+3\Gamma_{i,j}^{k}\sigma_{k}\partial_{l}\Big) +\frac{1}{4}s\partial_{l} \nonumber\\
&& -\sum_{i,j}g^{i,j}\Big((\partial_{l}\partial_{i}\sigma_{j})+(\partial_{l}\sigma_{i})\sigma_{j}
+\sigma_{i}(\partial_{l}\sigma_{j})-\Gamma_{i,j}^{k}(\partial_{l}\sigma_{k})\Big)+\frac{1}{4}(\partial_{l}s)\bigg\}   \nonumber\\
&& -\frac{1}{4}\sum_{s,t}\omega_{s,t}(\widetilde{e_i})c(\widetilde{e_i})c(\widetilde{e_s})c(\widetilde{e_t})
   \bigg\{-\sum_{i,j}g^{i,j}\partial_{i}\partial_{j} -\sum_{i,j}g^{i,j}(2\sigma_{i}\partial_{j}-\Gamma_{i,j}^{k}\partial_{k})  \nonumber\\
&& -\sum_{i,j}g^{i,j}\Big((\partial_{i}\sigma_{j})+\sigma_{i}\sigma_{j}-\Gamma_{i,j}^{k}\sigma_{k}\Big)+\frac{1}{4}s\bigg\}.
\end{eqnarray}
By Section in \cite{WW1}, we obtain
\begin{eqnarray}
\sigma_{3}(D^{3})&=&\sqrt{-1}c(\xi)|\xi|^2 , \\
\sigma_{2}(D^{3})&=&c(\xi)(4\sigma^k-2\Gamma^k)\xi_{k}
     -\frac{1}{4}|\xi|^2\sum_{s,t}\omega_{s,t}(\widetilde{e_l})c(e_{l})c(\widetilde{e_s})c(\widetilde{e_t}), \\
\sigma_{1}(D^{3})&=&\sum^n_{i=1}c(\widetilde{e_i}) \langle e_i, dx_{l}\rangle \bigg[
   -\sum_{i,j}g^{i,j}\Big(2(\partial_{l}\sigma_{i})\partial_{j}+2(\partial_{i}\sigma_{j})\partial_{l}
   +3\sigma_{i}\sigma_{j}\partial_{l}+3\Gamma_{i,j}^{k}\sigma_{k}\partial_{l}\Big) +\frac{1}{4}s\partial_{l}
    \bigg]   \nonumber\\
   && -\frac{1}{4}\sum_{s,t}\omega_{s,t}(\widetilde{e_i})c(\widetilde{e_i})c(\widetilde{e_s})c(\widetilde{e_t})
   \bigg[-\sum_{i,j}g^{i,j}(2\sigma_{i}\partial_{j}-\Gamma_{i,j}^{k}\partial_{k})
     \bigg],\\
\sigma_{0}(D^{3})&=&\sum^n_{i=1}c(\widetilde{e_i}) \langle e_i, dx_{l}\rangle
     \bigg[ -\sum_{i,j}g^{i,j}\Big((\partial_{l}\partial_{i}\sigma_{j})+(\partial_{l}\sigma_{i})\sigma_{j}
      +\sigma_{i}(\partial_{l}\sigma_{j})-\Gamma_{i,j}^{k}(\partial_{l}\sigma_{k})\Big)+\frac{1}{4}(\partial_{l}s)
     \bigg]   \nonumber\\
    && -\frac{1}{4}\sum_{s,t}\omega_{s,t}(\widetilde{e_i})c(\widetilde{e_i})c(\widetilde{e_s})c(\widetilde{e_t})
   \bigg[ -\sum_{i,j}g^{i,j}\Big((\partial_{i}\sigma_{j})+\sigma_{i}\sigma_{j}-\Gamma_{i,j}^{k}\sigma_{k}\Big)+\frac{1}{4}s\bigg].
\end{eqnarray}

Write
 \begin{equation}
D_x^{\alpha}=(-\sqrt{-1})^{|\alpha|}\partial_x^{\alpha};
~\sigma(D^{3})=p_3+p_2+p_1+p_0;
~\sigma(D^{-3})=\sum^{\infty}_{j=3}q_{-j}.
\end{equation}
 By the composition formula of psudodifferential operators, we have
 \begin{eqnarray}
1=\sigma(D^{3}\circ D^{-3})&=&\sum_{\alpha}\frac{1}{\alpha!}\partial^{\alpha}_{\xi}[\sigma(D)]D^{\alpha}_{x}[\sigma(D^{-1})] \nonumber\\
&=&(p_3+p_2+p_1+p_0)(q_{-3}+q_{-4}+q_{-5}+\cdots) \nonumber\\
&&+\sum_j(\partial_{\xi_j}p_3+\partial_{\xi_j}p_2+\partial_{\xi_j}p_1+\partial_{\xi_j}p_0)
(D_{x_j}q_{-3}+D_{x_j}q_{-4}+D_{x_j}q_{-5}+\cdots) \nonumber\\
&=&p_3q_{-3}+(p_3q_{-4}+p_2q_{-3}+\sum_j\partial_{\xi_j}p_3D_{x_j}q_{-3})+\cdots,
\end{eqnarray}
Then we obtain
\begin{equation}
q_{-3}=p_3^{-1};~q_{-4}=-p_3^{-1}[p_2p_3^{-1}+\sum_j\partial_{\xi_j}p_3D_{x_j}(p_3^{-1})].
\end{equation}
By Lemma 2.1 in \cite{Wa3} and (5.4)-(5.11), we obtain

\begin{lem}
\begin{eqnarray}
\sigma_{-3}(D^{-3})&=&\frac{\sqrt{-1}c(\xi)}{|\xi|^4}; \\
\sigma_{-4}(D^{-3})&=&\frac{c(\xi)\sigma_{2}(D^{3})c(\xi)}{|\xi|^8}
+\frac{c(\xi)}{|\xi|^{10}}\sum_j\Big[c(dx_j)|\xi|^2+2\xi_{j}c(\xi)\Big]\Big[\partial_{x_j}[c(\xi)]|\xi|^2-2c(\xi)\partial_{x_j}(|\xi|^2)\Big],
\end{eqnarray}
where $\sigma_{0}(D)=-\frac{1}{4}\sum_{s,t}\omega_{s,t}(\widetilde{e_i})c(\widetilde{e_i})c(\widetilde{e_s})c(\widetilde{e_t})$.
\end{lem}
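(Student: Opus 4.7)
The plan is to derive the two symbol formulas by specializing the general inversion formulas (5.11) for the composition of pseudodifferential operators, using the already-computed principal and subprincipal symbols of $D^3$ in (5.5)--(5.6).

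First I would compute $q_{-3}=p_3^{-1}$. Since $p_3=\sqrt{-1}\,c(\xi)|\xi|^2$ and the Clifford identity gives $c(\xi)^2=-|\xi|^2$, hence $c(\xi)^{-1}=-c(\xi)/|\xi|^2$, one immediately obtains
\[
p_3^{-1}=\frac{1}{\sqrt{-1}|\xi|^2}\cdot c(\xi)^{-1}=\frac{\sqrt{-1}\,c(\xi)}{|\xi|^4},
\]
which is the first assertion.

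Next I would compute $q_{-4}=-p_3^{-1}\bigl[p_2p_3^{-1}+\sum_j\partial_{\xi_j}p_3\,D_{x_j}(p_3^{-1})\bigr]$ term by term. For the first piece, substituting $p_3^{-1}$ on both sides of $p_2=\sigma_2(D^3)$ and using $(\sqrt{-1})^2=-1$ yields
\[
-p_3^{-1}p_2p_3^{-1}=\frac{c(\xi)\,\sigma_2(D^3)\,c(\xi)}{|\xi|^8}.
\]
For the second piece, differentiate $p_3$ in $\xi_j$ to get $\partial_{\xi_j}p_3=\sqrt{-1}\bigl[c(dx_j)|\xi|^2+2\xi_j c(\xi)\bigr]$, and use $D_{x_j}=-\sqrt{-1}\partial_{x_j}$ together with the product/quotient rule on $p_3^{-1}=\sqrt{-1}c(\xi)|\xi|^{-4}$ to obtain
\[
D_{x_j}(p_3^{-1})=\frac{|\xi|^2\partial_{x_j}[c(\xi)]-2c(\xi)\partial_{x_j}(|\xi|^2)}{|\xi|^6}.
\]
Multiplying everything out and pulling $c(\xi)/|\xi|^{10}$ out in front (the two factors of $\sqrt{-1}$ from $\partial_{\xi_j}p_3$ and $p_3^{-1}$ combine with $-1$ to give $+1$) produces exactly the second assertion.

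The only real subtlety will be sign and factor bookkeeping, because of the noncommutativity of the Clifford action (so factors of $c(\xi)$ cannot be freely moved past $\sigma_2(D^3)$ or past $\partial_{x_j}[c(\xi)]$) and because of the convention $D_x=-\sqrt{-1}\partial_x$ used in (5.9)--(5.11). Organizing the computation as above separates these two sources of signs cleanly: the Clifford squares $c(\xi)^2=-|\xi|^2$ handle the inner $p_3^{-1}$'s, while the prefactor $\sqrt{-1}\cdot\sqrt{-1}\cdot(-1)=+1$ handles the outer constants. No deeper input beyond Lemma 2.1 of \cite{Wa3} and the formulas for $\sigma_3(D^3)$ and $\sigma_2(D^3)$ is required.
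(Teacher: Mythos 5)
Your proposal is correct and follows exactly the route the paper takes: the paper derives Lemma 5.1 by citing Lemma 2.1 of \cite{Wa3} together with the composition-formula identities (5.9)--(5.11), i.e.\ $q_{-3}=p_3^{-1}$ and $q_{-4}=-p_3^{-1}[p_2p_3^{-1}+\sum_j\partial_{\xi_j}p_3D_{x_j}(p_3^{-1})]$, which is precisely what you expand using $c(\xi)^2=-|\xi|^2$ and $D_{x_j}=-\sqrt{-1}\partial_{x_j}$. Your sign bookkeeping checks out against both displayed formulas, so no gap remains.
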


 Since $\Phi$ is a global form on $\partial M$, so for any fixed point $x_0\in\partial M$, we can choose the normal coordinates
$U$ of $x_0$ in $\partial M$ (not in $M$) and compute $\Phi(x_0)$ in the coordinates $\widetilde{U}=U\times [0,1)\subset M$ and the
metric $ g^M=\frac{1}{\varphi(x_n)}g^{\partial M}+\psi(x_n)dx_n^2$. The dual metric of $g^M$ on $\widetilde{U}$ is
$\varphi(x_n) g^{\partial M}+\frac{1}{\psi(x_n)}dx_n^2.$
Write $g^M_{ij}=g^M(\frac{\partial}{\partial x_i},\frac{\partial}{\partial x_j});~ g_M^{ij}=g^M(dx_i,dx_j)$, then

\begin{equation}
[g^M_{i,j}]= \Big[\begin{array}{lcr}
  \frac{1}{\varphi(x_n)}[g_{i,j}^{\partial M}]  & 0  \\
   0  &  \psi(x_n)
\end{array}\Big];~~~
[g_M^{i,j}]= \Big[\begin{array}{lcr}
  \varphi(x_n)[g^{i,j}_{\partial M}]  & 0  \\
   0  &  \frac{1}{\psi(x_n)}
\end{array}\Big],
\end{equation}
and
 \begin{equation}
\partial_{x_s}g_{ij}^{\partial M}(x_0)=0, 1\leq i,j\leq n-1; ~~~g_{ij}^M(x_0)=\delta_{ij}.
\end{equation}

Let $n=6$ and $\{e_1,\cdots,e_{n-1}\}$ be an orthonormal frame field in $U$ about $g^{\partial M}$ which is parallel along geodesics and
$e_i(x_0)=\frac{\partial}{\partial x_i}(x_0)$, then $\{\widetilde{e_1}=\sqrt{\varphi(x_n)}e_1,\cdots,
\widetilde{e_{n-1}}=\sqrt{\varphi(x_n)}e_{n-1},
\widetilde{e_n}=\frac{1}{\sqrt{\psi(x_n)}}dx_n\}$ is the orthonormal frame field in $\widetilde U$ about $g^M$. Locally $S(TM)|_{\widetilde {U}}\cong
\widetilde {U}\times\wedge^* _{\bf C}(\frac{n}{2}).$
Let $\{f_1,\cdots,f_8\}$ be the orthonormal basis of $\wedge^* _{\bf C}(\frac{n}{2})$.
Take a spin frame field $\sigma:~\widetilde {U}\rightarrow {\rm Spin}(M)$ such that $\pi\sigma=
\{\widetilde{e_1},\cdots,\widetilde{e_n}\}$, where $\pi :~{\rm Spin}(M)\rightarrow O(M)$ is a double covering,
 then $\{[(\sigma,f_i)],~1\leq i\leq 8\}$ is an orthonormal frame of $S(TM)|_{\widetilde {U}}.$ In the following, since the global form
$\Phi$ is independent of the choice of the local frame, so we can compute ${\rm tr}_{S(TM)}$ in the frame $\{[(\sigma,f_i)],~1\leq
i\leq 8\}.$ Let $\{E_1,\cdots,E_n\}$ be the canonical basis of
${\bf R}^n$ and $c(E_i)\in {\rm cl}_{\bf C}(n)\cong {\rm Hom}(\wedge^*_{\bf C}(\frac{n}{2}),\wedge^* _{\bf C}(\frac{n}{2}))$
be the Clifford action. By \cite{Wa3}, \cite{Wa4} and \cite{Y} , we have
\begin{equation}
c( \widetilde{e_i})=[(\sigma,c(E_i))];~ c( \widetilde{e_i})[(\sigma,f_i)]=[(\sigma,c(E_i)f_i)];~
\frac{\partial}{\partial x_i}=[(\sigma,\frac{\partial}{\partial
x_i})],
\end{equation}
then we have $\frac{\partial}{\partial x_i}c( \widetilde{e_i})=0$ in the above frame.

Combining (3.3) and Lemma 3.3, we obtain

\begin{lem} For $4$-dimensional spin manifolds with boundary,
 \begin{equation}
\sigma_{0}(D)(x_0)=-\frac{5}{4} \varphi'(0)c(dx_n).
\end{equation}
\end{lem}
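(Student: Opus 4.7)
The statement should be read as the $6$-dimensional analogue of Lemma 3.4 (the "$4$-dimensional" in the statement is a carryover from Section 3; the section explicitly sets $n=6$). The proof is a direct unwinding of the formula
\[
\sigma_{0}(D) = -\frac{1}{4}\sum_{i,s,t}\omega_{s,t}(\widetilde{e_i})\,c(\widetilde{e_i})c(\widetilde{e_s})c(\widetilde{e_t})
\]
from (3.3)/(5.2), combined with the computation of the connection coefficients at $x_0$ already recorded in Lemma 3.3.

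My plan is as follows. First, invoke Lemma 3.3 to observe that at the point $x_0$ the only nonzero entries of the connection matrix are $\omega_{n,i}(\widetilde{e_i})(x_0) = \tfrac{1}{2}\varphi'(0)$ and $\omega_{i,n}(\widetilde{e_i})(x_0) = -\tfrac{1}{2}\varphi'(0)$ for $i < n$, with $n=6$. Plugging these into the defining formula for $\sigma_0(D)$ collapses the triple sum over $(i,s,t)$ into a single sum over $i < n$ of two surviving terms, namely
\[
\sigma_{0}(D)(x_0) \;=\; -\frac{1}{4}\sum_{i<n}\Big[\tfrac{1}{2}\varphi'(0)\,c(\widetilde{e_i})c(\widetilde{e_n})c(\widetilde{e_i}) \;-\; \tfrac{1}{2}\varphi'(0)\,c(\widetilde{e_i})c(\widetilde{e_i})c(\widetilde{e_n})\Big].
\]

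Next, I simplify using the Clifford relations $c(\widetilde{e_i})^{2} = -1$ and $c(\widetilde{e_i})c(\widetilde{e_n}) = -c(\widetilde{e_n})c(\widetilde{e_i})$ for $i \neq n$. The first term becomes $c(\widetilde{e_i})c(\widetilde{e_n})c(\widetilde{e_i}) = -c(\widetilde{e_n})c(\widetilde{e_i})c(\widetilde{e_i}) = c(\widetilde{e_n})$, and the second term becomes $c(\widetilde{e_i})c(\widetilde{e_i})c(\widetilde{e_n}) = -c(\widetilde{e_n})$. So each summand contributes $\tfrac{1}{2}\varphi'(0)\,c(\widetilde{e_n}) + \tfrac{1}{2}\varphi'(0)\,c(\widetilde{e_n}) = \varphi'(0)\,c(\widetilde{e_n})$, and the $i$-sum (over $5$ indices, since $n=6$) yields
\[
\sigma_{0}(D)(x_0) \;=\; -\frac{1}{4}\cdot 5\,\varphi'(0)\,c(\widetilde{e_n})\;=\;-\frac{5}{4}\varphi'(0)\,c(\widetilde{e_n}).
\]

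Finally, since $\psi(0) = 1$ one has $\widetilde{e_n}(x_0) = \tfrac{1}{\sqrt{\psi(0)}}dx_n = dx_n$, and therefore $c(\widetilde{e_n})(x_0) = c(dx_n)$, yielding the claim. There is no serious obstacle here; the whole lemma is an exercise in Clifford-algebra bookkeeping, and the only point requiring a little care is to correctly sign the two surviving $\omega$-terms before commuting the Clifford generators. This also serves as the natural $n$-dependent generalization of Lemma 3.4, in which the coefficient was $-\tfrac{3}{4}$ for $n=4$; for general $n$ one reads off $-\tfrac{n-1}{4}\varphi'(0)\,c(dx_n)$.
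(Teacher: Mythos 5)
Your proposal is correct and follows exactly the route the paper intends: the paper's entire ``proof'' is the single line ``Combining (3.3) and Lemma 3.3, we obtain,'' and you have simply written out that combination in full, with the right sign convention $c(\widetilde{e_i})^2=-1$ (consistent with ${\rm tr}[c(dx_n)^2]=-8$ in (5.22)) and the correct count of $n-1=5$ surviving indices. You are also right that the ``$4$-dimensional'' in the statement is a typo carried over from Lemma 3.4 and that the general pattern is $-\tfrac{n-1}{4}\varphi'(0)c(dx_n)$.
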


 Now we can compute $\Phi$ (see formula (2.4) for the definition of $\Phi$), since the sum is taken over $
-r-l+k+j+|\alpha|=5,~~r\leq -1,~l\leq -4,$ then we have the following five cases:

{\bf case a)~I)}~$r=-1,~l=-3,~k=j=0,~|\alpha|=1$

From (2.4) we have
 \begin{equation}
{\rm case~a)~I)}=-\int_{|\xi'|=1}\int^{+\infty}_{-\infty}\sum_{|\alpha|=1}
 {\rm trace}[\partial^\alpha_{\xi'}\pi^+_{\xi_n}\sigma_{-1}(D^{-1})\times
 \partial^\alpha_{x'}\partial_{\xi_n}\sigma_{-3}(D^{-3})](x_0)d\xi_n\sigma(\xi')dx'.
\end{equation}
By Lemma 3.2, for $i<n$, then
\begin{eqnarray}
\partial_{x_i}\sigma_{-1}(D^{-1})(x_0)&=&\partial_{x_i}\left(\frac{\sqrt{-1}c(\xi)}{|\xi|^2}\right)(x_0) \nonumber\\
&=&\frac{\sqrt{-1}\partial_{x_i}[c(\xi)](x_0)}{|\xi|^2}
-\frac{\sqrt{-1}c(\xi)\partial_{x_i}(|\xi|^2)(x_0)}{|\xi|^4}\nonumber\\
&=&0.
\end{eqnarray}
Then case a) I) vanishes.

 {\bf case a)~II)}~$r=-1,~l=-3, ~k=|\alpha|=0,~j=1$

From (2.4) we have
 \begin{equation}
{\rm case~ a)~II)}=-\frac{1}{2}\int_{|\xi'|=1}\int^{+\infty}_{-\infty} {\rm
trace} [\partial_{x_n}\pi^+_{\xi_n}\sigma_{-1}(D^{-1})\times
\partial_{\xi_n}^2\sigma_{-3}(D^{-3})](x_0)d\xi_n\sigma(\xi')dx'.
\end{equation}
By Lemma 3.1 and Lemma 3.2, we have
 \begin{equation}
\partial^2_{\xi_n}\sigma_{-3}(D^{-3})=\frac{20i\xi_n^{2}-4i}{(1+\xi_n^{2})^{4}}c(\xi')+\frac{12i\xi_n^{3}-12i\xi_n}{(1+\xi_n^{2})^{4}}c(dx_n).
\end{equation}
Since $n=6$, ${\rm tr}_{S(TM)}[{\rm id}]={\rm dim}(\wedge^*(3))=8$.
By the relation of the Clifford action and ${\rm tr}{AB}={\rm tr }{BA}$, we have the equalities:
\begin{eqnarray}
&&{\rm tr}[c(\xi')c(dx_n)]=0;~~{\rm tr}[c(dx_n)^2]=-8;~~{\rm tr}[c(\xi')^2](x_0)|_{|\xi'|=1}=-8; \nonumber\\
&&{\rm tr}[\partial_{x_n}[c(\xi')]c(dx_n)]=0;~~{\rm tr}[\partial_{x_n}[c(\xi')]c(\xi')](x_0)|_{|\xi'|=1}=-4\varphi'(0);\nonumber\\
&&{\rm tr}[\partial_{x_n}[c(dx_n)]c(\xi')]=0;~~{\rm tr}[\partial_{x_n}[c(dx_n)]c(dx_n)](x_0)|_{|\xi'|=1}=4\psi'(0).
\end{eqnarray}
From (3.22), (5.21)and (5.22), we have
\begin{eqnarray}
&&{\rm tr}\Big\{\Big[\frac{\partial_{x_n}[c(\xi')](x_0)}{2(\xi_n-i)}+\frac{i\partial_{x_n}[c(dx_n)](x_0)}{2(\xi_n-i)}
+\frac{(2i-\xi_n)\varphi'(0)+\xi_n\psi'(0)}{4(\xi_n-i)^{2}}c(\xi')
+\frac{-\varphi'(0)+(1+2i\xi_n)\psi'(0)}{4(\xi_n-i)^2}c(dx_n)\Big]\nonumber\\
&&~~~~\times
\Big[\frac{20i\xi_n^{2}-4i}{(1+\xi_n^{2})^{4}}c(\xi')+\frac{12i\xi_n^{3}-12i\xi_n}{(1+\xi_n^{2})^{4}}c(dx_n)\Big]
\Big\}(x_0)|_{|\xi'|=1} \nonumber\\
&=&\frac{8(i\varphi'(0)+\xi_n\psi'(0))(-1-2 i \xi_n+3\xi_n^{2})}{(\xi_n-i)^5(\xi_n+i)^4}.
\end{eqnarray}
Substituting (5.23) into (5.20), we have
\begin{eqnarray}
{\rm case~ a)~II)}&=&-\frac{1}{2}\int_{|\xi'|=1}\int^{+\infty}_{-\infty}
 \frac{8(i\varphi'(0)+\xi_n\psi'(0))(-1-2 i \xi_n+3\xi_n^{2})}{(\xi_n-i)^5(\xi_n+i)^4}d\xi_n\sigma(\xi')dx' \nonumber\\
&=&-\Omega_4\int_{\Gamma^+}\frac{4(i\varphi'(0)+\xi_n\psi'(0))(-1-2 i \xi_n+3\xi_n^{2})}{(\xi_n+i)^4}d\xi_ndx'\nonumber\\
&=&-\Omega_4 \frac{2\pi i}{4!}\Big[\frac{4(i\varphi'(0)+\xi_n\psi'(0))(-1-2 i \xi_n+3\xi_n^{2})}{(\xi_n+i)^4}\Big]^{(4)}|_{\xi_n=i}dx'\nonumber\\
&=&-\frac{1}{16}\big(15\varphi'(0)+7\psi'(0)\big)\pi \Omega_4dx',
\end{eqnarray}
 where $\Omega_4$ is the canonical volume of $S^4$.

{\bf case a)~III)}~$r=-3,~l=-1,~j=|\alpha|=0,~k=1$\\

From (2.4) we have
 \begin{equation}
{\rm case~ a)~III)}=-\frac{1}{2}\int_{|\xi'|=1}\int^{+\infty}_{-\infty}
{\rm trace} [\partial_{\xi_n}\pi^+_{\xi_n}\sigma_{-1}(D^{-1})\times
\partial_{\xi_n}\partial_{x_n}\sigma_{-3}(D^{-3})](x_0)d\xi_n\sigma(\xi')dx'.
\end{equation}
By (2.2.29) in \cite{Wa3}, we have
 \begin{equation}
\partial_{\xi_n}\pi^+_{\xi_n}\sigma_{-1}(D^{-1})(x_0)|_{|\xi'|=1}=\frac{-c(\xi')}{2(\xi_n-i)^2}+\frac{-ic(dx_n)}{2(\xi_n-i)^2}.
\end{equation}
From (5.12) we have
\begin{eqnarray}
\partial_{\xi_n}\partial_{x_n}\sigma_{-3}(D^{-3})(x_0)|_{|\xi'|=1}
&=&\frac{-4i\xi_n}{(1+\xi_n^{2})^{3}}\partial_{x_n}[c(\xi')](x_0)+\frac{i-3i\xi_n^{2}}{(1+\xi_n^{2})^{3}}\partial_{x_n}[c(dx_n)](x_0)\nonumber\\
&&+\frac{(10i\xi_n^{2}-2i)\varphi'(0)+(6i\xi_n^{2}-6i\xi_n^{4})\psi'(0)}{(1+\xi_n^{2})^{4}}c(dx_n)\nonumber\\
&&+\frac{12i\xi_n\varphi'(0)+(4i\xi_n-8i\xi_n^{3})\psi'(0)}{(1+\xi_n^{2})^{4}}c(\xi').
\end{eqnarray}
Combining (5.26) and (5.27), we obtain
\begin{eqnarray}
&&{\rm  tr}\Big\{\Big[\frac{-c(\xi')-ic(dx_n)}{2(\xi_n-i)^2}\Big]\times
\Big[\frac{-4i\xi_n}{(1+\xi_n^{2})^{3}}\partial_{x_n}[c(\xi')](x_0)+\frac{i-3i\xi_n^{2}}{(1+\xi_n^{2})^{3}}\partial_{x_n}[c(dx_n)](x_0)\nonumber\\
&&~~~+\frac{(10i\xi_n^{2}-2i)\varphi'(0)+(6i\xi_n^{2}-6i\xi_n^{4})\psi'(0)}{(1+\xi_n^{2})^{4}}c(dx_n)
+\frac{12i\xi_n\varphi'(0)+(4i\xi_n-8i\xi_n^{3})\psi'(0)}{(1+\xi_n^{2})^{4}}c(\xi')
\Big]
\Big\}(x_0)|_{|\xi'|=1} \nonumber\\
&=&\frac{(8i-32\xi_n-8i\xi_n^{2} )\varphi'(0)+(2i-14\xi_n-14i\xi_n^{2}+18\xi_n^{3})\psi'(0)}{(\xi_n-i)^5(\xi_n+i)^4}.
\end{eqnarray}
Substituting (5.28) into (5.25), one sees that
\begin{eqnarray}
{\rm {\bf case~a)~III)}}&=&
-\frac{1}{2}\int_{|\xi'|=1}\int^{+\infty}_{-\infty}
\frac{(8i-32\xi_n-8i\xi_n^{2} )\varphi'(0)+(2i-14\xi_n-14i\xi_n^{2}+18\xi_n^{3})\psi'(0)}{(\xi_n-i)^5(\xi_n+i)^4}d\xi_n\sigma(\xi')dx' \nonumber\\
&=& -\frac{1}{2}\times\frac{2 \pi i}{4!}\Big[\frac{(8i-32\xi_n-8i\xi_n^{2} )\varphi'(0)+(2i-14\xi_n-14i\xi_n^{2}+18\xi_n^{3})\psi'(0)}
 {(\xi_n+i)^4} \Big]^{(4)}|_{\xi_n=i}\Omega_4dx'\nonumber\\
&=&\frac{1}{16}\big(25\varphi'(0)+\psi'(0)\big)\pi \Omega_4dx'.
\end{eqnarray}

{\bf case b)}~$r=-2,~l=-3,~k=j=|\alpha|=0$\\

From (2.4) we have
 \begin{equation}
{\rm case~ b)}=-i\int_{|\xi'|=1}\int^{+\infty}_{-\infty}{\rm trace} [\pi^+_{\xi_n}\sigma_{-2}(D^{-1})\times
\partial_{\xi_n}\sigma_{-3}(D^{-3})](x_0)d\xi_n\sigma(\xi')dx'.
\end{equation}
By Lemma 3.1 , Lemma 3.2 and Lemma 5.1, we have
\begin{equation}
\partial_{\xi_n}\sigma_{-3}(D^{-3})=\frac{-4 i \xi_n c(\xi')+(i-3i\xi_n^{2})c(dx_n)}{(1+\xi_n^{2})^3}.
\end{equation}
and
\begin{eqnarray}
\sigma_{-2}(D^{-1})(x_0)&=& \frac{c(\xi)\sigma_{0}(D)(x_0)c(\xi)}{|\xi|^4}
+\frac{1}{|\xi|^4}c(\xi)c(dx_n)\Big[\partial_{x_n}[c(\xi')](x_0)+\xi_n\partial_{x_n}[c(dx_n)](x_0)\Big]  \nonumber\\
&&-\frac{1}{|\xi|^6}c(\xi)c(dx_n)c(\xi)\Big[\varphi'(0)-\xi_n^{2}\psi'(0)\Big].
\end{eqnarray}
Then
\begin{eqnarray}
\pi^+_{\xi_n}\sigma_{-2}(D^{-1})(x_0)|_{|\xi'|=1}&=& \pi^+_{\xi_n}\Big[\frac{c(\xi)\sigma_{0}(D)(x_0)c(\xi)}{|\xi|^4}\Big]\nonumber\\
&&+\pi^+_{\xi_n}\Big[\frac{1}{|\xi|^4}c(\xi)c(dx_n)\big[\partial_{x_n}[c(\xi')](x_0)+\xi_n\partial_{x_n}[c(dx_n)](x_0)\big]\Big]  \nonumber\\
&&+\pi^+_{\xi_n}\Big[\frac{1}{|\xi|^6}c(\xi)c(dx_n)c(\xi)\big[\xi_n^{2}\psi'(0)-\varphi'(0)\big]\Big] \nonumber\\
&:=&A+B+C.
\end{eqnarray}
 Similarly to (3.18), by Lemma 3.4 we have
 \begin{equation}
A=\frac{-1}{4(\xi_n-i)^{2}}\Big[\frac{-3\varphi'(0)}{4}(2+i\xi_n)c(\xi')c(dx_n)c(\xi')-\frac{-3i\xi_n\varphi'(0)}{4} c(dx_n)
+\frac{3i\varphi'(0)}{2} c(\xi')\Big].
\end{equation}
And
\begin{eqnarray}
B&=&\frac{-1}{4(\xi_n-i)^{2}}\Big[(2+i\xi_n)c(\xi')c(dx_n)\partial_{x_n}[c(\xi')](x_0)
+ic(\xi')c(dx_n)\partial_{x_n}[c(dx_n)](x_0) \nonumber\\
&&-i\partial_{x_n}[c(\xi')](x_0)
-i\xi_n\partial_{x_n}[c(dx_n)](x_0) \Big],
\end{eqnarray}
\begin{eqnarray}
C&=&\frac{\xi_n}{16(\xi_n-i)^{3}}\Big[-3\varphi'(0)-i\xi_n\varphi'(0)+\psi'(0)+3i\xi_n\psi'(0)  \Big]c(dx_n) \nonumber\\
  &&+\frac{1}{8(\xi_n-i)^{3}}\Big[-3\varphi'(0)-i\xi_n\varphi'(0)+\psi'(0)+3i\xi_n\psi'(0)  \Big]c(\xi')\nonumber\\
  &&+\frac{1}{16(\xi_n-i)^{3}}\Big[-8i\varphi'(0)+9\xi_n \varphi'(0)+3i\xi_n^{2}\varphi'(0)-3\xi_n\psi'(0)-i\xi_n^{2}\psi'(0)
        \Big]c(\xi')c(dx_n)c(\xi').
\end{eqnarray}
 By (5.31) and (5.34)-(5.36), we obtain
\begin{eqnarray}
{\rm tr }[A\times\partial_{\xi_n}\sigma_{-3}(D^{-3})(x_0)]|_{|\xi'|=1}&=&\frac{(5+15 i\xi_n)\varphi'(0)}{(\xi_n-i)^{4}(\xi_n+i)^{3}};\\
{\rm tr }[B\times\partial_{\xi_n}\sigma_{-3}(D^{-3})(x_0)]|_{|\xi'|=1}&=&
 \frac{(-2-3i\xi_n+3\xi_n^{2})\varphi'(0)+(3i\xi_n+3\xi_n^{2})\psi'(0)}{(\xi_n-i)^{4}(\xi_n+i)^{3}};
\\
{\rm tr }[C\times\partial_{\xi_n}\sigma_{-3}(D^{-3})(x_0)]|_{|\xi'|=1}&=&
\frac{(-4i+11\xi_n+6i\xi_n^{2}-3\xi_n^{3})\varphi'(0)-(5\xi_n+6i\xi_n^{2}+3\xi_n^{3})\psi'(0)}{(\xi_n-i)^{5}(\xi_n+i)^{3}}.
\end{eqnarray}
 Combining (5.30), (5.37), (5.38) and (5.39), we obtain
\begin{eqnarray}
{\bf case~ b)}&=&
 -i \int_{|\xi'|=1}\int^{+\infty}_{-\infty}\frac{(3\xi_n-i)(-7i\varphi'(0)+5\xi_n\varphi'(0)-2\xi_n\psi'(0))}{-i(\xi_n-i)^{5}(\xi_n+i)^{3}}
  d\xi_n\sigma(\xi')dx' \nonumber\\
 &=& -i \Omega_4 \int_{\Gamma^+} \frac{(3\xi_n-i)(-7i\varphi'(0)+5\xi_n\varphi'(0)-2\xi_n\psi'(0))}{-i(\xi_n-i)^{5}(\xi_n+i)^{3}}d\xi_ndx'\nonumber\\
&=& -i \Omega_4 \frac{2 \pi i}{4!}\Big[\frac{(3\xi_n-i)(-7i\varphi'(0)+5\xi_n\varphi'(0)-2\xi_n\psi'(0))}{-i(\xi_n+i)^{3}}
     \Big]^{(4)}|_{\xi_n=i}dx'\nonumber\\
&=&\frac{1}{16}\big(55\varphi'(0)-\psi'(0)\big)\pi \Omega_4dx'.
\end{eqnarray}

 {\bf  case c)}~$r=-1,~l=-4,~k=j=|\alpha|=0$\\

From (2.4) we have
 \begin{equation}
{\rm case~ c)}=-i\int_{|\xi'|=1}\int^{+\infty}_{-\infty}{\rm trace} [\pi^+_{\xi_n}\sigma_{-1}(D^{-1})\times
\partial_{\xi_n}\sigma_{-4}(D^{-3})](x_0)d\xi_n\sigma(\xi')dx'.
\end{equation}
By (2.2.44) in \cite{Wa3}, we have
 \begin{equation}
\pi^+_{\xi_n}\sigma_{-1}(D^{-1})(x_0)|_{|\xi'|=1}=\frac{c(\xi')+ic(dx_n)}{2(\xi_n-i)};
\end{equation}
From (5.13) we have
\begin{eqnarray}
&&\partial_{\xi_n}\sigma_{-4}(D^{-3})(x_0)\nonumber\\
 &=&\frac{(59\xi_{n}+27\xi_{n}^{3})\varphi'(0)+(8\xi_{n} -24\xi_{n}^{3})\psi'(0) }{2(1+\xi_{n}^2)^{5}}c(\xi')c(dx_n)c(\xi')\nonumber\\
  &&+\frac{(33-180\xi_{n}^{2}-85\xi_{n}^{4})\varphi'(0) +(-48\xi_{n}^{2}+80\xi_{n}^{4})\psi'(0)}{2(1+\xi_{n}^2)^{5}}c(\xi')\nonumber\\
 && +\frac{(49\xi_{n}-97\xi_{n}^3-50\xi_{n}^{5})\varphi'(0)+(-48\xi_{n}^{3}+48\xi_{n}^{5})\psi'(0)  }{2(1+\xi_{n}^2)^{5}}c(dx_n)\nonumber\\
 &&+\frac{-6\xi_{n}}{(1+\xi_{n}^2)^{4}}c(\xi')c(dx_n)\partial_{x_n}[c(\xi')](x_0)
 +\frac{-3+15\xi_{n}^{2}}{(1+\xi_{n}^2)^{4}}\partial_{x_n}[c(\xi')](x_0)\nonumber\\
&&+\frac{1-5\xi_{n}^{2}}{(1+\xi_{n}^2)^{4}}c(\xi')c(dx_n)\partial_{x_n}[c(dx_n)](x_0)
 +\frac{-6\xi_{n}+12\xi_{n}^{3}}{(1+\xi_{n}^2)^{4}}\partial_{x_n}[c(dx_n)](x_0).
\end{eqnarray}
Then similarly to computations of the case b), we have
\begin{eqnarray}
&&{\rm trace} [\pi^+_{\xi_n}\sigma_{-1}(D^{-1})\times \partial_{\xi_n}\sigma_{-4}(D^{-3})](x_0)|_{|\xi'|=1}\nonumber\\
&=&\frac{2i}{(\xi_n-i)^{5}(\xi_n+i)^{5}}
\Big[(-30-72i\xi_n+96\xi_n^{2}-20 i\xi_n^{3}+50\xi_n^{4} )\varphi'(0)  \nonumber\\
&&+ (1-15i\xi_n+29\xi_n^{2}+49i\xi_n^{3}-36\xi_n^{4}) \psi'(0)\Big]
\end{eqnarray}
Combining (5.41) and (5.44), we obtain
  \begin{equation}
{\bf case~ c)}=-\frac{3}{16}\big(35\varphi'(0)-6\psi'(0)\big)\pi \Omega_4dx'.
\end{equation}

Since $\Phi$ is the sum of the cases a), b) and c), so
  \begin{equation}
\Phi=-\frac{1}{16}\big(40\varphi'(0)-11\psi'(0)\big)\pi \Omega_4dx'.
\end{equation}

Now recall the Einstein-Hilbert action for manifolds with boundary \cite{Wa3}\cite{Wa4}\cite{Y},
 \begin{equation}
I_{\rm Gr}=\frac{1}{16\pi}\int_Ms{\rm dvol}_M+2\int_{\partial M}K{\rm dvol}_{\partial_M}:=I_{\rm {Gr,i}}+I_{\rm {Gr,b}},
\end{equation}
 where
 \begin{equation}
 K=\sum_{1\leq i,j\leq {n-1}}K_{i,j}g_{\partial M}^{i,j};~~K_{i,j}=-\Gamma^n_{i,j},
\end{equation}
 and $K_{i,j}$ is the second fundamental form, or extrinsic curvature. Taking the metric in Section 2, then by Lemma
A.2 \cite{Wa3}, for $n=6$, then
 \begin{equation}
K(x_0)=-\frac{5}{2}\varphi'(0);~
 I_{\rm {Gr,b}}=-5\varphi'(0){\rm Vol}_{\partial M}.
\end{equation}
Let $\psi'(0)=c\varphi'(0)$, then we obtain

\begin{thm}
Let  $M$ be a $6$-dimensional
compact spin manifold with the boundary $\partial M$ and the metric
$g^M$ as above, $D$ be the Dirac operator on $\widehat{M}$ and $\psi'(0)=c\varphi'(0)$, then
\begin{equation}
{\rm Vol}^{(1,3)}_6=\widetilde{{\rm Wres}}[\pi^+D^{-1}\circ\pi^+D^{-3}]=-\frac{5\Omega_5}{3}\int_Ms{\rm dvol}_M
 +(1-\frac{11c}{40})\pi \Omega_4\int_{\partial M}Kd{\rm Vol}_{\partial M}.
\end{equation}
\end{thm}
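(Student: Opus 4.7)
The plan is to apply the decomposition \eqref{} of Definition 2.1 (formulas (2.3)-(2.4)), which splits $\widetilde{{\rm Wres}}[\pi^+D^{-1}\circ\pi^+D^{-3}]$ into an interior contribution $\int_M\int_{|\xi|=1}{\rm tr}[\sigma_{-6}(D^{-4})]\sigma(\xi)\,dx$ and a boundary contribution $\int_{\partial M}\Phi$. The interior piece is insensitive to the boundary structure, so its evaluation reduces to the standard Kastler-Kalau-Walze calculation already carried out in \cite{Wa4}, producing the scalar curvature integral with the appropriate constant (as in Theorem 4.2). Thus all new work lies in $\Phi$.

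For the boundary term, since $r\le -1$, $l\le -3$, and $-r-l+k+j+|\alpha|=5$, there are exactly the five index configurations listed as \textbf{a) I)}, \textbf{a) II)}, \textbf{a) III)}, \textbf{b)}, and \textbf{c)}. Working in the boundary normal coordinates of Section 3 with the orthonormal frame built from $\widetilde{e_i}=\sqrt{\varphi(x_n)}e_i$ and $\widetilde{e_n}=\frac{1}{\sqrt{\psi(x_n)}}dx_n$, Lemma 3.2 shows that all tangential derivatives of the symbols vanish at $x_0$, which immediately kills case \textbf{a) I)}. The remaining four cases are evaluated by (i) expanding $\sigma_{-1}(D^{-1})$, $\sigma_{-2}(D^{-1})$, $\sigma_{-3}(D^{-3})$, and $\sigma_{-4}(D^{-3})$ at $x_0$ using Lemmas 3.1 and 5.1 together with the connection data $\sigma_0(D)(x_0)=-\tfrac{5}{4}\varphi'(0)c(dx_n)$ from Lemma 5.2, (ii) computing the projections $\pi^+_{\xi_n}$ through the Cauchy integral formula on $\Gamma^+$, just as in (3.18)-(3.21), and (iii) taking the $S(TM)$-trace using the Clifford identities of (5.22), where ${\rm tr}[{\rm id}]=8$ since $n=6$. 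The residual $\xi_n$-integrals are then evaluated as residues at $\xi_n=i$, which in every case requires a fourth derivative of a rational function because of the fifth-order pole $(\xi_n-i)^{-5}$.

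Adding the five boundary contributions yields $\Phi=-\frac{1}{16}\bigl(40\varphi'(0)-11\psi'(0)\bigr)\pi\Omega_4\,dx'$, which is proportional to a single scalar combination of $\varphi'(0)$ and $\psi'(0)$ because the Clifford trace identities eliminate all mixed terms involving $\partial_{x_n}[c(\xi')]$ and $c(\xi')c(dx_n)c(\xi')$ upon integration. Imposing $\psi'(0)=c\varphi'(0)$ and invoking $K(x_0)=-\frac{5}{2}\varphi'(0)$ (from Lemma A.2 of \cite{Wa3} and (5.49)), the prefactor $-\frac{1}{16}(40-11c)\varphi'(0) = (1-\tfrac{11c}{40})\cdot(-\tfrac{5}{2}\varphi'(0))\cdot\tfrac{40}{40}$ rewrites the boundary integrand as $(1-\tfrac{11c}{40})\pi\Omega_4\,K$, completing the stated identity.

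The principal obstacle is the careful bookkeeping in case \textbf{c)}: one must first derive $\sigma_{-4}(D^{-3})$ from Lemma 5.1 and then differentiate it in $\xi_n$ (producing seven distinct tensor structures in (5.43)) before composing with $\pi^+_{\xi_n}\sigma_{-1}(D^{-1})$; the resulting integrand has a double pole at $\xi_n=-i$ in addition to the fifth-order pole at $\xi_n=i$, so the residue calculation is noticeably more delicate than in cases \textbf{a)} or \textbf{b)}. A secondary point of care is ensuring the cancellations that make $\Phi$ depend only on $\varphi'(0)$ and $\psi'(0)$: one must verify that all contributions from $\partial_{x_n}[c(\xi')]$ and from $c(\xi')c(dx_n)c(\xi')$ either vanish by the odd-$\xi'$ symmetry of $\int_{|\xi'|=1}\sigma(\xi')$ or combine into the final scalar form.
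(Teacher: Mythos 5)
Your proposal is correct and follows essentially the same route as the paper: the interior term is imported from the closed-manifold computation of $\sigma_{-6}(D^{-4})$, and the boundary term $\Phi$ is assembled from the five index cases via the symbol expansions of Lemmas 3.1, 5.1 and 5.2, the Cauchy-integral computation of $\pi^+_{\xi_n}$, the Clifford trace identities with ${\rm tr}[{\rm id}]=8$, and fourth-order residues at $\xi_n=i$, yielding $\Phi=-\frac{1}{16}\bigl(40\varphi'(0)-11\psi'(0)\bigr)\pi\Omega_4\,dx'$ and then the stated formula via $K(x_0)=-\frac{5}{2}\varphi'(0)$. The only slip is immaterial: in case \textbf{c)} the pole at $\xi_n=-i$ has order five rather than two, but it lies outside $\Gamma^+$ and never enters the residue computation.
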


\begin{rem}
In \cite{Wa3} \cite{Wa4}, Wang computed
 $\widetilde{{\rm Wres}}[\pi^+D^{-1}\circ\pi^+D^{-1}]$ and $\widetilde{{\rm Wres}}[\pi^+D^{-2}\circ\pi^+D^{-2}]$.
 In that cases, the boundary terms vanished, where the two operators are symmetric.  Theorem 5.3 states the
 boundary terms is non-zero when we compute $\widetilde{{\rm Wres}}[\pi^+D^{-1}\circ\pi^+D^{-3}]$. The reason
 is that $D^{-1}$ and $D^{-3}$ are not  symmetric.
\end{rem}

Let
 \begin{equation}
\widetilde{{\rm Wres}}[\pi^+D^{-1}\circ\pi^+D^{-3}]
=\widetilde{{\rm Wres}}_{i}[\pi^+D^{-1}\circ\pi^+D^{-3}]+\widetilde{{\rm Wres}}_{b}[\pi^+D^{-1}\circ\pi^+D^{-3}],
\end{equation}
where
 \begin{equation}
\widetilde{{\rm Wres}}_{i}[\pi^+D^{-1}\circ\pi^+D^{-3}]
=\int_M\int_{|\xi|=1}{\rm trace}_{S(TM)}[\sigma_{-6}(D^{-1-3})]\sigma(\xi)dx
\end{equation}
and
\begin{eqnarray}
&&\widetilde{{\rm Wres}}_{b}[\pi^+D^{-1}\circ\pi^+D^{-3}]\nonumber\\
&=& \int_{\partial M}\int_{|\xi'|=1}\int^{+\infty}_{-\infty}\sum^{\infty}_{j, k=0}\sum\frac{(-i)^{|\alpha|+j+k+1}}{\alpha!(j+k+1)!}
\times {\rm trace}_{S(TM)}[\partial^j_{x_n}\partial^\alpha_{\xi'}\partial^k_{\xi_n}\sigma^+_{r}(D^{-1})(x',0,\xi',\xi_n)
\nonumber\\
&&\times\partial^\alpha_{x'}\partial^{j+1}_{\xi_n}\partial^k_{x_n}\sigma_{l}(D^{-3})(x',0,\xi',\xi_n)]d\xi_n\sigma(\xi')dx'
\end{eqnarray}
denote the interior term and boundary term of $\widetilde{{\rm Wres}}[\pi^+D^{-1}\circ\pi^+D^{-3}]$.

Combining (5.47), (5.50) and (5.51), we obtain
\begin{cor}
Let  $M$ be a $6$-dimensional
compact spin manifold with the boundary $\partial M$ and the metric
$g^M$ as above and $D$ be the Dirac operator on $\widehat{M}$, then
\begin{eqnarray}
&& I_{\rm {Gr,i}}=\frac{-3}{80 \pi\Omega_5 }\widetilde{{\rm Wres}}_{i}[\pi^+D^{-1}\circ\pi^+D^{-3}]; \nonumber\\
&&I_{\rm {Gr,b}}=\frac{80}{(40-11c)\pi\Omega_4 }\widetilde{{\rm Wres}}_{b}[\pi^+D^{-1}\circ\pi^+D^{-3}].
\end{eqnarray}
\end{cor}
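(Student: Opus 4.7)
The plan is to derive the corollary as a direct algebraic consequence of Theorem 5.3 together with the decomposition (5.52)--(5.54) of $\widetilde{{\rm Wres}}[\pi^+D^{-1}\circ\pi^+D^{-3}]$ into its interior and boundary pieces, matched against the split $I_{\rm Gr}=I_{\rm {Gr,i}}+I_{\rm {Gr,b}}$ of the Einstein--Hilbert action from (5.47).

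First I would handle the interior identity. The key observation is that on the interior the symbol $\sigma_{-6}(D^{-4})$ coincides with the symbol in the boundaryless setting, so the standard Kastler--Kalau--Walze computation from \cite{Ka}, \cite{KW}, \cite{Ac}, \cite{Wa4} applies verbatim and delivers a multiple of $\int_M s\,{\rm dvol}_M$. Extracting the scalar-curvature coefficient from the interior part of Theorem 5.3 yields
\begin{equation*}
\widetilde{{\rm Wres}}_{i}[\pi^+D^{-1}\circ\pi^+D^{-3}]=-\frac{5\Omega_5}{3}\int_Ms\,{\rm dvol}_M,
\end{equation*}
and comparing with $I_{\rm {Gr,i}}=\frac{1}{16\pi}\int_Ms\,{\rm dvol}_M$ immediately gives the first equality in the corollary by a one-line algebraic check on the constants.

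Next I would handle the boundary identity. The pointwise formula (5.46) for $\Phi$, which is the sum of cases a), b), c) computed in Section 5, is already precisely $\Phi=-\frac{1}{16}(40\varphi'(0)-11\psi'(0))\pi\Omega_4\,dx'$. Under the hypothesis $\psi'(0)=c\varphi'(0)$ this becomes a scalar multiple of $\varphi'(0)$, which by (5.49) is $-\tfrac{2}{5}K(x_0)$. Integrating over $\partial M$ gives
\begin{equation*}
\widetilde{{\rm Wres}}_{b}[\pi^+D^{-1}\circ\pi^+D^{-3}]=\Bigl(1-\tfrac{11c}{40}\Bigr)\pi\Omega_4\int_{\partial M}K\,{\rm dvol}_{\partial M},
\end{equation*}
and since $I_{\rm {Gr,b}}=2\int_{\partial M}K\,{\rm dvol}_{\partial M}$, solving for $I_{\rm {Gr,b}}$ yields the stated constant $\frac{80}{(40-11c)\pi\Omega_4}$.

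Since all the hard symbolic analysis (symbols of $D^{-1}$ and $D^{-3}$, the five-case enumeration of $\Phi$, and the residue integrals in cases a), b), c)) has already been done in Theorem 5.3, there is no genuine obstacle here; the corollary is simply a repackaging of that theorem. The only point requiring care is the bookkeeping of constants --- in particular the appearance of $\Omega_5$ versus $\Omega_6$ in the interior term and the normalization $K(x_0)=-\tfrac{5}{2}\varphi'(0)$ in the boundary term --- and the verification that $40-11c\ne 0$ so that the second formula is well posed.
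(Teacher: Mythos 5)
Your proposal is correct and follows essentially the same route as the paper, which obtains the corollary by combining the Einstein--Hilbert decomposition (5.47), the two-term formula of Theorem 5.3, and the interior/boundary splitting of $\widetilde{{\rm Wres}}[\pi^+D^{-1}\circ\pi^+D^{-3}]$, then solving for the constants exactly as you do. Your extra remarks on the $\Omega_5$ versus $\Omega_6$ normalization and on requiring $40-11c\neq 0$ are sensible bookkeeping points but do not change the argument.
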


 In particular, when $c=\frac{40}{11}+\frac{6400\Omega_5}{33\Omega_4}$, then we obtain
\begin{thm}
Let  $M$ be a $6$-dimensional
compact spin manifold with the boundary $\partial M$ and the metric
$g^M$ as above, $D$ be the Dirac operator on $\widehat{M}$ and $\psi'(0)=c\varphi'(0)$, then
\begin{eqnarray}
&& {\rm Vol}^{(1,3)}_6=\widetilde{{\rm Wres}}[\pi^+D^{-1}\circ\pi^+D^{-3}]=-\frac{80\pi\Omega_5}{3}\Big[\frac{1}{16\pi}\int_Ms{\rm dvol}_M
 +2\int_{\partial M}Kd{\rm Vol}_{\partial M}\Big]; \nonumber\\
&&I_{\rm {Gr}}=\frac{-3}{80 \pi\Omega_5 }\widetilde{{\rm Wres}}[\pi^+D^{-1}\circ\pi^+D^{-3}].
\end{eqnarray}
\end{thm}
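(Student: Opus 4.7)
The plan is to derive Theorem 5.5 as an immediate algebraic consequence of Theorem 5.3 combined with the decomposition (5.47) of the Einstein-Hilbert action into interior and boundary parts. Since Theorem 5.3 already expresses ${\rm Vol}^{(1,3)}_6$ as the sum of a scalar-curvature integral and a mean-curvature boundary integral whose coefficients depend on $c$ through the relation $\psi'(0)=c\varphi'(0)$, the idea is to exploit this single free parameter to force the boundary coefficient to be proportional to the interior coefficient in exactly the ratio required by $I_{\rm Gr}$.

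First I would recall from Theorem 5.3 the identity
\[
{\rm Vol}^{(1,3)}_6 = -\frac{5\Omega_5}{3}\int_M s\,{\rm dvol}_M + \left(1-\frac{11c}{40}\right)\pi\Omega_4\int_{\partial M}K\,d{\rm Vol}_{\partial M}.
\]
Rewriting the interior coefficient as $-\frac{5\Omega_5}{3} = -\frac{80\pi\Omega_5}{3}\cdot\frac{1}{16\pi}$ exhibits the universal prefactor $-\frac{80\pi\Omega_5}{3}$ multiplied by the Einstein-Hilbert interior normalization $\frac{1}{16\pi}$ appearing in (5.47). For ${\rm Vol}^{(1,3)}_6$ to equal $-\frac{80\pi\Omega_5}{3}\,I_{\rm Gr}$, the boundary coefficient must therefore equal $-\frac{80\pi\Omega_5}{3}\cdot 2 = -\frac{160\pi\Omega_5}{3}$.

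The next step is to impose $\left(1-\frac{11c}{40}\right)\pi\Omega_4 = -\frac{160\pi\Omega_5}{3}$ and solve for $c$, which yields
\[
c = \frac{40}{11} + \frac{6400\Omega_5}{33\Omega_4},
\]
precisely the value stated in the theorem. Substituting this $c$ back into Theorem 5.3 gives the first displayed identity of Theorem 5.5, and dividing by $-\frac{80\pi\Omega_5}{3}$ yields the second. There is no conceptual difficulty; the argument is a single algebraic manipulation.

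The main ``obstacle'' is really a consistency check rather than a hard computation: one must verify that the boundary coefficient $1-\frac{11c}{40}$ coming from summing cases a)~II), a)~III), b), and c) of Section 5 is a genuinely nontrivial affine function of $c$, so that some real scalar can cancel it against the Einstein-Hilbert ratio $2/(1/16\pi)=32\pi$. The fact that a clean $c$ exists and produces the simple operator-theoretic identity $I_{\rm Gr}=\frac{-3}{80\pi\Omega_5}\widetilde{{\rm Wres}}[\pi^+D^{-1}\circ\pi^+D^{-3}]$ is itself a strong internal check on the elaborate symbol and contour-integral computations carried out earlier in Section 5.
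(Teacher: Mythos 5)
Your proposal is correct and follows essentially the same route as the paper: the paper likewise obtains Theorem 5.5 by matching the boundary coefficient $(1-\tfrac{11c}{40})\pi\Omega_4$ from Theorem 5.3 (restated via Corollary 5.4 as a proportionality between $\widetilde{{\rm Wres}}_{b}$ and $I_{\rm Gr,b}$) against the interior ratio $\tfrac{-3}{80\pi\Omega_5}$ and solving for $c=\tfrac{40}{11}+\tfrac{6400\Omega_5}{33\Omega_4}$. Your coefficient matching and the resulting value of $c$ agree with the paper's.
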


\section*{ Acknowledgements}
This work was supported by Fok Ying Tong Education Foundation under Grant No. 121003 and NSFC. 11271062. And Jian Wang's Email
address: wangj068@gmail.com. The author also thank the referee for his (or her) careful reading and helpful comments.

\section*{References}

\end{document}